\numberwithin{equation}{section}
\theoremstyle{plain}
\newtheorem{theorem}{Theorem}[section]
\theoremstyle{remark}
\newtheorem{remark}{Remark}[section]
\theoremstyle{lemma}
\newtheorem{lemma}{Lemma}[section]
\theoremstyle{corollary}
\newtheorem{corollary}{Corollary}[section]
\theoremstyle{definition}
\newtheorem{definition}{Definition}[section]
\newcommand{\lb}{\left (}
\newcommand{\rb}{\right )}
\newcommand{\lbb}{\left [}
\newcommand{\rbb}{\right ]}
\newcommand{\labs}{\left |}
\newcommand{\rabs}{\right |}
\newcommand{\lbrb}[1]{\lb #1 \rb}
\newcommand{\lbbrbb}[1]{\lbb#1\rbb}
\newcommand{\lbbrb}[1]{\lbb#1\rb}
\newcommand{\lbrbb}[1]{\lb#1\rbb}
\newcommand{\lbcurly}{\left\{}
\newcommand{\rbcurly}{\right\}}
\newcommand{\abs}[1]{\labs#1\rabs}
\newcommand{\curly}[1]{\lbcurly#1\rbcurly}
\newcommand{\limi}[1]{\lim_{#1\to \infty}}
\newcommand{\liminfi}[1]{\liminf_{#1\to \infty}}
\newcommand{\Eb}{\mathbb{E}}
\newcommand{\Rb}{\mathbb{R}}
\newcommand{\Pb}{\mathbb{P}}
\newcommand{\Bc}{\mathcal{B}}
\newcommand{\Oc}{\mathcal{O}}
\newcommand{\Pbb}[1]{\mathbb{P}\lb #1\rb}
\newcommand{\Ebb}[1]{\Eb\lbb #1\rbb}
\newcommand{\LL}{L\'{e}vy }
\newcommand{\LLP}{L\'{e}vy process }
\newcommand{\LLPs}{L\'{e}vy processes }
\begin{document}

\begin{frontmatter}
\title{A Characterization of the Finiteness of Perpetual Integrals of L\'evy Processes}
\runtitle{Finiteness of Perpetual Integrals of L\'evy Processes}

\begin{aug}
\author{\fnms{Martin} \snm{Kolb}\thanksref{a,e1}\ead[label=e1,mark]{kolb@math.uni-paderborn.de}} \and 
\author{\fnms{Mladen} \snm{Savov}\thanksref{b,e2}\ead[label=e2,mark]{mladensavov@math.bas.bg}}
\address[a]{Institut f\"ur Mathematik, Universit\"at Paderborn, Warburger Str. 100,
	33098 Paderborn, Germany
\printead{e1}}

\address[b]{Institute of Mathematics and Informatics, Bulgarian Academy of Sciences, Akad. Georgi Bonchev street Block 8, Sofia 1113, Bulgaria
\printead{e2}}

\runauthor{M. Kolb et al.}

\affiliation{Some University and Another University}

\end{aug}

\begin{abstract}
\,\,We derive a criterium for the almost sure finiteness of perpetual integrals of \LL processes for a class of real functions including all continuous functions and for general one-dimensional L\'evy  processes that drifts to plus infinity. This generalizes previous work of D\"oring and Kyprianou, who considered L\'evy  processes having a local time, leaving the general case as an open problem. It turns out, that the criterium in the general situation simplifies significantly in the situation, where the process has a local time, but we also demonstrate that in general our criterium can not be reduced.  This answers an open problem posed in \cite{doring}.
\end{abstract}

\begin{keyword}[class=MSC]
	\kwd[Primary ]{60J25}
	\kwd{60J55}
	\kwd{60J75}
\end{keyword}

\begin{keyword}
\kwd{\LL processes}
\kwd{Perpetual integrals}
\kwd{Potential measures} 
\end{keyword}

\end{frontmatter}

\section{Introduction}

Let $\xi = (\xi_s)_{s \geq 0}$ be a L\'{e}vy process, that is $\xi$ is almost surely right-continuous stochastic process in $\Rb$, which possesses stationary and independent increments. In this paper we are interested in the finiteness of functionals of transient \LL processes, which escape to infinity, that is $\limi{s}\xi_s=\infty$. We provide a characterization of the almost sure finiteness of additive functionals of $\xi$, which are of the form 
\begin{equation}\label{i:perpetual}
\int_0^\infty f(\xi_s)\,ds.
\end{equation}
Questions of this type are of course a classical topic, which has been studied intensively for several classes of stochastic processes. The most recent paper, see \cite{K019}, deals with a large class of Feller processes and provides conditions  for almost sure infiniteness of functionals as in \eqref{i:perpetual} in terms of the symbol of the Feller processes, which can  be then specialized to \LLPs but yield less comprehensive results than ours. For classes of diffusion processes and in particular for Brownian motion results characterizing finiteness of perpetual integrals are well known and there exists a huge number of papers related to this problem. Therefore, we will only mention those approaches, which are most relevant for the present work. We will mainly rely on ideas of Charles Batty which have been developed in \cite{batty} for transient Brownian motion, that is a Brownian motion in $\Rb^d,d\geq 3$. Batty's methods  are  our main tools but due to the discontinuity of  \LLPs under consideration they have to be augmented and diversified sometimes in a non-trivial way. We would like to emphasize that Batty is mainly interested in a different purely analytic question related to Schr\"odinger operators. In the sequel we briefly sketch his results as these connections to spectral and potential theory do not seem to be well-known in the probability community. Let us consider Brownian motion $(B_t)_{t\geq 0}$ in $\mathbb{R}^d$ for $d\geq 3$ and  let $H:= -\frac{1}{2}\Delta + V$ ($0\leq V \in L^1_{loc}(\mathbb{R}^d)$) be the Schr\"odinger operator with potential $V$, then it is known that the semigroup $e^{-tH}$ can be represented as 
\begin{equation}\label{e:FK-formula}
e^{-tH}\psi(x) = \mathbb{E}_x\bigl[ e^{-\int_0^tV(B_s)\,ds}\psi(B_t)\bigr],
\end{equation}
where $0\leq \psi \in L^1_{loc}(\mathbb{R}^d)$. Batty is interested in finding necessary and sufficient criteria ensuring that 
\begin{displaymath}
\forall \psi \in L^1:\,\quad \|e^{-tH}\psi\|_{L^1(\mathbb{R}^d)} \rightarrow 0\quad \text{as $t\rightarrow \infty$},
\end{displaymath}
a property, which  is often called $L^1$-stability. Considering the Feynman-Kac-formula \eqref{e:FK-formula} it becomes clear, that there is a close connection between $L^1$-stability and the almost sure finiteness properties of the perpetual integral $\int_0^{\infty}V(B_s)\,ds$. Replacing Brownian motion $(B_t)_{t\geq 0}$ by a transient one-dimensional L\'evy process $\xi=(\xi_s)_{s\geq 0}$ leads to the the question of the present paper.  

In order to emphasize further connections to mathematical disciplines closely tied with probability we will sketch another approach to the problem above. It is possible to look at the problem from a more potential theoretic perspective by observing that the almost sure finiteness of the additive functional \eqref{i:perpetual} implies that the function $h$ defined by 
\begin{displaymath}
h(x)=\mathbb{E}_x\bigl[e^{-\int_0^{\infty}f(\xi_s)\,ds}\bigr]
\end{displaymath}
defines a non-trivial, non-negative and bounded function. If we denote by $\mathcal{L}$ the generator of the L\'evy process $\xi=(\xi_s)_{s\geq 0}$ then at least formally the function $h$ is harmonic for the operator $\mathcal{L}-f$, which is a more general version of the Schr\"odinger operator above. This connection has been successfully exploited in the context of Brownian motion in \cite{R08}.

The case of a transient L\'evy process instead of a Brownian motion does not seem to be fully understood until now and this note aims to contribute further insights. Motivated by previous work of D\"oring and Kyprianou covering the case of transient L\'evy processes with finite mean possessing a local time we will show how Batty's ideas lead to a characterization of the almost sure finiteness without the assumption of existence of a local time.  It is worth mentioning that these ideas are applicable in the context of processes with jumps since key stopping times related to additive functionals are announceable and therefore the process is almost surely continuous at these random moments.

\section{Notation and Main Results}
In order to fix the notation we recall that $\xi=(\xi_s)_{s \geq 0}$ denotes a one-dimensional L\'evy process which drifts towards plus infinity. We have already pointed out, that the questions have close connections to spectral and in particular to potential theory and it is therefore not surprising that notions from potential theory play a crucial role. Recall that the potential measure $U(dx)$ of a transient \LL process, defined as 
\begin{equation}\label{eq:potem}
\begin{split}
&U(dx) = \int\limits_{0}^{\infty}\mathbb{P}(\xi_s \in dx) \,ds,
\end{split}
\end{equation}
is a non-negative measure, which is finite on compact sets.  The measure $U$ can be interpreted as the expected occupation time measure.

Following \cite{batty} we introduce the following class of transient/recurrent sets. This class will be an essential ingredient in our characterization of the almost sure finiteness of perpetual integrals.
\begin{definition}\label{def:tra}
	Let $E$ be a Borel subset of $\mathbb{R}$ and let $x\in\Rb$. For brevity let $\Pb^x$ denote the law of $x+\xi$. The set $\mathbb{R} \setminus E$ is said to be $\Pb^x$-recurrent for the L\'{e}vy process if
	\begin{equation}\label{eq:trans}
	\mathbb{P}^x\left( \exists \tau\geq 0 : \xi_s \in E \; \forall s \geq \tau \right) = 0
	\end{equation}
	and is called  $\Pb^x$-transient if
	\begin{equation}
	\mathbb{P}^x\left( \exists \tau\geq 0 : \xi_s \in E \; \forall s \geq \tau \right) = 1.
	\end{equation}
\end{definition}
Observe that the set $\curly{\exists \tau\geq 0 : \xi_s \in E \; \forall s \geq \tau}$ belongs to the tail sigma algebra of the \LL process and therefore using \cite[Chapter 8, Exercise 10]{Dhavar} it has probability zero or one. 
The main insight leading to Definition \ref{def:tra} is that the behaviour of the function $f$ on a $\mathbb{P}^x$-transient set $\mathbb{R} \setminus E$ should not matter too much as far as the $\mathbb{P}^x$-almost sure finiteness of the perpetual integral $\int_0^{\infty}f(\xi_s)\,ds$ is concerned.  We will make a slight digression to elucidate an interesting question about triviality of the tail sigma algebra. If $\xi$ is a Brownian motion then if \eqref{eq:trans} holds for one $x$, it holds for all. 
This is due to the fact that in the case of Brownian motion the restriction of $\mathbb{P}_{\lambda}$ is independent of the initial distribution $\lambda$. 
This is not the case for general \LLPs as can easily be checked taking a counting Poisson process with different starting position. This independence property can be connected with the so called coupling property as given in \cite{T00}. The validity of the coupling property for \LLP has been investigated in detail \cite{S11}. Theorem 4.3 in \cite{S11} e.g. gives a necessary condition for the validity 
of the coupling property for \LL processes. 

Our main goal is to prove the following theorem which provides an analytic characterization of finiteness of functionals of \LL processes. The proof will be given in Section 4 below. In the subsequent Section 3 we will first compare our result with previous characterizations, which have been obtained for restricted classes of L\'evy processes.

\begin{theorem}
	\label{main_theorem}
	Let $f:\mathbb{R} \to \mathbb{R}^{+}$ be a non-negative, continuous or ultimately non-increasing, locally bounded function and $\xi = \{\xi_s\}_{s \geq 0}$ be a L\'{e}vy process such that  we have $\limi{s}\xi_s=\infty$. Let $x\in\Rb$. Then the following characterization is in place:
	\begin{equation}\label{eq:RHS}
	\begin{split}
	&\mathbb{P}\left( \int\limits_{0}^{\infty}{f(x+\xi_s)ds} < \infty \right) = 0 \\
	&\iff \int\limits_{E}f(x+y)U(dy) = \infty\,\text{ $\forall E\in\Bc\lbrb{\Rb}$ with  $\Rb\setminus E$ being $\Pb^x$-transient.}
	\end{split}
	\end{equation}
	Otherwise, if the right-hand integral is finite for some such $E$, then  $\int\limits_{0}^{\infty}f(x+\xi_s)ds<\infty$ almost surely.
\end{theorem}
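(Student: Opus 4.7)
Set $I:=\int_0^\infty f(x+\xi_s)ds$ and recall that $\{I<\infty\}$ is a tail event, so by the $0$--$1$ law invoked in the introduction $\Pb^x(I<\infty)\in\{0,1\}$. The easy direction is the ``otherwise'' statement: if $E$ satisfies $\int_E f(x+y)U(dy)<\infty$ and $\Pb^x$-almost surely there is a finite $\tau$ with $\xi_s\in E$ for all $s\ge\tau$, I would split $I=\int_0^\tau f(x+\xi_s)ds+\int_\tau^\infty f(x+\xi_s)ds$; the pre-$\tau$ part is finite because $f$ is locally bounded and the path is bounded on the compact interval $[0,\tau]$, while the post-$\tau$ part is dominated by $\int_0^\infty f(x+\xi_s)\mathbf{1}_E(\xi_s)ds$, whose $\Pb^x$-expectation equals $\int_E f(x+y)U(dy)<\infty$ by Fubini and \eqref{eq:potem}. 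Via the $0$--$1$ law, the nontrivial content reduces to showing: if $I<\infty$ $\Pb^x$-a.s., then some such $E$ exists.

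For the construction I would introduce
\[
u(y):=\Eb^y\lbb\exp\lbrb{-\int_0^\infty f(\xi_s)ds}\rbb.
\]
The Markov property yields the bounded $\Pb^x$-martingale $M_t:=e^{-A_t}u(x+\xi_t)$ (with $A_t:=\int_0^t f(x+\xi_s)ds$), and martingale convergence gives $M_t\to e^{-I}$ $\Pb^x$-a.s. Under our assumption $M_\infty=e^{-I}>0$ a.s.; since $A_t\to I$, this forces $u(x+\xi_t)\to 1$ a.s. Hence every level set $E_\varepsilon:=\{y:u(y)\ge 1-\varepsilon\}$ has $\Rb\setminus E_\varepsilon$ being $\Pb^x$-transient, i.e.\ the process is eventually trapped in $E_\varepsilon$.

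The main obstacle is to show that $\int_{E_\varepsilon}f(x+y)U(dy)<\infty$ for at least one $\varepsilon$, since the $\Pb^x$-a.s.\ finiteness of $I$ does not by itself entail $\Eb^x[I]<\infty$. My plan is to exploit the additive-functional stopping times $\sigma_c:=\inf\{t:A_t\ge c\}$. These are predictable because $A_t$ is a.s.\ continuous in $t$; by quasi-left continuity of $\xi$ the process then has no jump at $\sigma_c$ almost surely, which is the announceability emphasised in the introduction. The strong Markov property at $\sigma_c$ yields an identity of the form $\Eb^x[(I-c)\mathbf{1}_{\{\sigma_c<\infty\}}]=\Eb^x[G(x+\xi_{\sigma_c})\mathbf{1}_{\{\sigma_c<\infty\}}]$ for $G(y):=\int f(y+z)U(dz)$, and combining this with the elementary inequality $1-u(y)\le G(y)$ (coming from $1-e^{-I}\le I$) should let one extract, after intersecting with a half-line $(a,\infty)$ on which the regularity of $f$ (continuous or ultimately non-increasing) together with $\xi_s\to\infty$ provides sharper control, a set $E\subseteq E_\varepsilon$ with transient complement and finite potential integral. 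This last transfer---from exponential-martingale smallness of $1-u$ to pointwise control of the potential $G$---is the step I expect to be the principal technical hurdle, because it is precisely here that the jumps of $\xi$ force, as the introduction warns, a non-trivial augmentation of Batty's originally continuous-path argument.
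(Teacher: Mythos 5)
Your easy direction (splitting at $\tau$ and applying Fubini to the truncated integrand) is exactly the paper's argument and is fine. Your martingale construction of the candidate set is also sound and rather elegant: $M_t=e^{-A_t}u(x+\xi_t)=\Eb\lbb e^{-I}\mid\Fc_t\rbb\to e^{-I}$ indeed forces $u(x+\xi_t)\to1$ when $I<\infty$ a.s., so $\Rb\setminus E_\varepsilon$ is transient; and by Markov's inequality $\Pbb{I^y>a}\leq (1-u(y))/(1-e^{-a})\leq \varepsilon/(1-e^{-a})$ for $y\in E_\varepsilon$, so your $E_\varepsilon$ sits inside the paper's set $L_a(q)$ with $q=\varepsilon/(1-e^{-a})$ --- the two routes are essentially the same argument in different coordinates (the paper works directly with $L_a(q)=\{y:\Pbb{I^y>a}\leq q\}$ and never needs to exhibit a transient complement, arguing instead by contradiction in Lemma \ref{lem:inf}). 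The genuine gap is in the step you yourself flag as the hurdle, and the tools you name there do not close it. The inequality $1-u(y)\leq G(y)$ points the wrong way: it bounds $1-u$ by the potential, whereas you must bound the potential of the \emph{truncated} function over $E_\varepsilon$, and no pointwise control of $G$ on $E_\varepsilon$ is available (nor needed). Likewise the Kac identity for the full $f$ is vacuous ($\infty=\infty$) precisely in the interesting case where $I<\infty$ a.s.\ but $\Eb[I]=\infty$.

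What actually closes the argument (the paper's Lemmas \ref{lemma2} and \ref{lemma3}) is: truncate first, i.e.\ work with $J^x=\int_0^\infty (f\mathbb{I}_{E_\varepsilon})(x+\xi_s)\,ds$ and its level-crossing times $T_{na}$; by predictability and quasi-left continuity the process sits in the closed set $\mathrm{supp}(f)\cap E_\varepsilon$ at each finite $T_{na}$, where the uniform bound $\Pbb{I^y\leq a}\geq 1-q>0$ holds; chaining over $n$ then gives
\begin{equation*}
\Pbb{J^x<\infty}\;\geq\;\frac{1-q}{a}\,\Ebb{J^x},\qquad\text{hence}\qquad \Ebb{J^x}\leq \frac{a}{1-q}<\infty,
\end{equation*}
which is exactly the required $\int_{E_\varepsilon}f(x+y)\,U(dy)<\infty$. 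Note also that you must verify that $E_\varepsilon$ is closed (upper semicontinuity of $u$, obtained via Fatou from the hypotheses that $f$ is continuous or ultimately non-increasing, cf.\ Lemma \ref{lem:L} and Corollary \ref{rem:classf}); without closedness the quasi-left-continuity step does not place $x+\xi_{T_{na}}$ inside $E_\varepsilon$, the uniform lower bound cannot be invoked, and the chaining collapses. With these two ingredients supplied, your proposal becomes a complete proof along essentially the same lines as the paper's.
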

\begin{remark}Let $f(x)=e^{-x}$ which is strictly decreasing on $\Rb$. Then since $(-\infty,0)$ is $\Pb$-transient and  $\int_{0}^{\infty}e^{-y}U(dy)$ is finite being the $U^{1}$ potential, we conclude that the following $\int_{0}^{\infty}e^{-\xi_s}ds<\infty$ holds. This elementary fact is well-known from the theory of exponential functionals of \LL processes, see \cite{PS18}, but illustrates the applicability of our result. We also emphasize that from Lemma \ref{lem:inf} and the proof of the main theorem for relation \eqref{eq:RHS} to hold it suffices to know that the set \[L^+_a(q):=\curly{x\in\Rb^+:\Pbb{\int_{0}^{\infty}f\lbrb{x+\xi_s}ds>a}\leq q}\] 
is closed. If this can be obtained by other means as in Remark \ref{rem:DK} the result would be valid.
\end{remark}
Observe that Theorem \ref{main_theorem} perfectly fits to the intuition that the behaviour of the function $f$ on a suitable class of negligible sets -- in our case transient sets -- should not contribute in the integral test. If the event $\mathbb{R} \setminus E$ is $\mathbb{P}$-transient then there is a finite (random) time $\tau$ such that after $\tau$ the path $\xi$ does not visit $\mathbb{R} \setminus E$ anymore and the behaviour of $f$ on $\mathbb{R} \setminus E$ can not be relevant for the finiteness of the perpetual integral. 
The next corollary extends Theorem \ref{main_theorem} in general and in particular gives conditions on the underlying \LLP  when it is valid for any non-negative, locally integrable, measurable function.
	\begin{corollary}\label{main_cor}
		Assume that the locally bounded
		$f=\limi{n}f_n$ is the pointwise limit on $L\subseteq \Rb$  of the non-decreasing sequence of real functions $\curly{f_n}_{n\geq 1}$ of the form
		\begin{equation*}
		\begin{split}
		&f_n=\sum_{j=1}^{k(n)} c^{(n)}_{j}\mathbb{I}_{(\alpha^{(n)}_j, \beta^{(n)}_j)}, 
		\end{split}
		\end{equation*}
		where $\mathbb{I}_{\cdot}$ stands for the indicator function of a set, $\alpha^{(n)}_j<\beta^{(n)}_j, \alpha^{(n)}_j,\beta^{(n)}_j\in \Rb, 1\leq j\leq k(n),$ and $c^{(n)}_{j}, n\geq 1,1\leq j\leq k(n),$ are non-negative real numbers. Let also $\Pbb{x+\xi_s\in L^c}=0,$ for all $s>0,x\in \Rb$.  Then, Theorem \ref{main_theorem} holds true for this $f$. In particular, for a given $\xi$,  it holds true for any non-negative, locally bounded, measurable function $f$ whenever  $\Pbb{x+\xi_s\in A}=0,$ for all $s>0,x\in \Rb$ and for any set $A$ of zero Lebesgue measure. The claim is also true even for non-negative, locally integrable functions if $\int_{0}^{t}f(x+\xi_s)ds<\infty$ for all $t>0$ and $x\in \Rb$.
	\end{corollary}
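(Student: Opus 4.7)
The approach is to reduce to the continuous case covered by Theorem \ref{main_theorem} via a double monotone approximation, invoking the Remark that follows the main theorem. That Remark identifies the closedness of
\[
L^+_a(q) := \curly{x \in \Rb : \Pbb{\textstyle\int_0^\infty f(x+\xi_s)\,ds > a} \leq q}
\]
as the single analytic input required for the proof of \eqref{eq:RHS}, so it suffices to transfer this closedness from continuous approximations to $f$.

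Each indicator $\mathbb{I}_{(\alpha,\beta)}$ is the pointwise increasing limit of the continuous triangular cut-offs $y \mapsto \max\curly{0, \min\curly{1, k\min(y-\alpha,\beta-y)}}$, so every $f_n$ is the pointwise increasing limit of non-negative, locally bounded, continuous functions $g_{n,k} \uparrow f_n$ as $k\to\infty$. The hypothesis $\Pbb{x+\xi_s \in L^c} = 0$ combined with Fubini gives that $\curly{s\geq 0 : x+\xi_s \in L^c}$ has a.s.\ zero Lebesgue measure and that $U(L^c - x) = 0$, so two applications of monotone convergence yield
\[
I_{n,k}(x) := \int_0^\infty g_{n,k}(x+\xi_s)\,ds \;\uparrow\; \int_0^\infty f_n(x+\xi_s)\,ds \;\uparrow\; I(x) := \int_0^\infty f(x+\xi_s)\,ds
\]
$\Pb$-almost surely, and analogously $\int_E g_{n,k}(x+y)\,U(dy) \uparrow \int_E f_n(x+y)\,U(dy) \uparrow \int_E f(x+y)\,U(dy)$ for every Borel $E$.

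The decisive step: with $\phi_{n,k}(x) := \Pbb{I_{n,k}(x) > a}$ and $\phi(x) := \Pbb{I(x) > a}$, continuity of $\Pb$ along the monotone sequences above yields $\sup_{n,k}\phi_{n,k}(x) = \phi(x)$, whence
\[
L^+_a(q) = \bigcap_{n,k} \curly{x \in \Rb : \phi_{n,k}(x) \leq q}.
\]
Each factor on the right is closed since the Remark's closedness assertion applies to the continuous $g_{n,k}$, which is covered by Theorem \ref{main_theorem}. Consequently $L^+_a(q)$ is closed for $f$ too, and the Remark delivers the characterization \eqref{eq:RHS} for $f$. The easy direction is proved separately by splitting $\int_0^\infty f(x+\xi_s)\,ds$ at the a.s.\ finite time $\tau$ past which $x+\xi_s \in E$: the part before $\tau$ is finite by local boundedness of $f$ (or by the standing hypothesis in the locally integrable case), while the part after is dominated by $\int_0^\infty f(x+\xi_s)\mathbb{I}_E(x+\xi_s)\,ds$, whose expectation is the finite $\int_E f(x+y)\,U(dy)$.

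For the remaining assertions, every non-negative, locally bounded, Borel $f$ agrees outside a Lebesgue-null set $L^c$ with a pointwise increasing limit of finite combinations of indicators of open intervals: by regularity of Lebesgue measure every Borel set equals an open superset modulo a null set, each open subset of $\Rb$ is a disjoint countable union of open intervals, and the standard dyadic construction is finitely truncated at each stage. Under the hypothesis $\Pbb{x+\xi_s \in A} = 0$ for every Lebesgue-null $A$, this $L^c$ is absorbed and the first part of the corollary applies; the locally integrable case is handled identically. The principal technical point is verifying that closedness of $L^+_a(q)$ transmits through the double monotone limit, which reduces at once to a countable intersection of closed sets via the continuity-of-measure identity $\sup_{n,k}\phi_{n,k} = \phi$.
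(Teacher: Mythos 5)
Your proof of the first claim (for $f$ that is an increasing pointwise limit on $L$ of step functions over open intervals) is correct, and it is in substance the paper's own argument: the paper's Lemma \ref{lem:L} proves closedness of $L^+_a(q)$ for each $f_n$ directly by Fatou's lemma --- openness of the intervals makes $\mathbb{I}_{(\alpha,\beta)}$ lower semicontinuous, which is all that Fatou needs --- and then passes to $f$ via the identity $L^{f,+}_a(q)=\bigcap_{n\geq 1}L^{f_n,+}_a(q)$, which is exactly your countable-intersection step; the hard direction then follows from Lemma \ref{lem:inf} and the easy direction by splitting the integral at the a.s.\ finite time after which the path stays in $E$, as you do. Your intermediate layer of triangular continuous approximations $g_{n,k}$ is harmless but superfluous, and note that closedness of $L^+_a(q)$ for continuous functions is not a consequence of the statement of Theorem \ref{main_theorem} (as your wording suggests) but of the separate Fatou argument recorded in Corollary \ref{rem:classf}.

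The genuine gap is in your final paragraph, which treats general measurable $f$. The assertion ``by regularity of Lebesgue measure every Borel set equals an open superset modulo a null set'' is false: for a fat Cantor set $B$ (compact, nowhere dense, of positive Lebesgue measure), any open $O\supseteq B$ has $O\setminus B$ open and non-empty, hence of positive measure; regularity only provides open supersets of arbitrarily small, never zero, excess measure. This failure is not cosmetic. If $\curly{f_n}_{n\geq 1}$ is non-decreasing and converges a.e.\ to $f$, then necessarily $f_n\leq f$ a.e.; but for $f=\mathbb{I}_B$ every step function $\sum_j c_j\mathbb{I}_{(\alpha_j,\beta_j)}$ with $c_j\geq 0$ lying below $f$ a.e.\ must have all $c_j=0$, because each interval $(\alpha_j,\beta_j)$ meets the open set $B^c$ in a non-empty open set of positive measure. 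Hence no non-decreasing step-function approximation of $\mathbb{I}_B$ exists, so the reduction of the general measurable case to the first claim cannot proceed as you sketch it; moreover, replacing dyadic level sets by open supersets destroys the monotonicity of the approximating sequence, which is precisely what the countable-intersection identity rests on. The paper does not attempt this construction by hand: it invokes Shilov's class $C^+$ of functions admitting such non-decreasing step-function approximations off a Lebesgue-null set and applies the first part to members of that class --- and the fat Cantor example above shows that such approximations do not exist for every non-negative integrable function, so this approximation step is the real crux of the ``in particular'' claims and cannot be dispatched by an appeal to regularity of Lebesgue measure.
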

\begin{remark}\label{rem:genf} 
	 In the case when $\xi$ does not live on a lattice one can employ a mixture of indicators of open, semi-open and closed intervals in the form of  $f_n,n\geq 1$. Moreover, $\Pbb{x+\xi_s\in A}=0,$ for all $s>0,x\in \Rb$ and for any set $A$ of zero Lebesgue measure whenever the random variables $\xi_s$ are absolutely continuous for every $s>0$. 
\end{remark}
\begin{remark}\label{eq:exp}
	Since $\int_{0}^{\infty}f\lbrb{\cdot+\xi_s}ds$ is an additive functional then the Khas’minskii’s condition, see  \cite[p.120]{FP}, reads off as
		\begin{equation*}
		\begin{split}
		&J=\sup_{x\in\Rb}\int_{\Rb}f(x+y)U(dy)<\infty,
		\end{split}
		\end{equation*}
        where $\int_{\Rb}f(x+y)U(dy)$ equals $G_f\mathbf{1}(x)$ in the context of 	\cite[(4)]{FP}.
 In this case it holds true even that
	\begin{equation}\label{eq:K}
	\begin{split}
	&\Ebb{e^{\theta \int_{0}^\infty f(x+\xi_s)ds}}<\infty,\quad\text{ for $\theta<\frac{1}{J}$,}
	\end{split}
	\end{equation}
	see \cite[p.120]{FP}.
\end{remark}

\section{Connections to previous results}\label{sec:ex}
In this section we will clarify the relationship between our main result and those contained in the previous literature closest to our setting. More precisely, we will show, how previous results can be recovered from Theorem \ref{main_theorem}. Furthermore, we give an example demonstrating, that in general the integral test provided by Theorem \ref{main_theorem} can not be reduced to the integral test presented in \cite{doring} for a restricted class of transient L\'evy processes. This answers a question posed in \cite{doring}.
\subsection*{Relations to \cite{doring}:}
An illustrative example is the case discussed in \cite{doring}.  For convenience we recall the formulation of the main result of \cite{doring}. For this we assume that $\xi$ is a \LLP with $\mu=\Ebb{\xi_1}\in\lbrb{0,\infty}$ and that $\xi$ possesses a local time, see \cite[Chapter V]{bertoin} for more information on the existence of local time of \LL processes. Furthermore, we exclude the case of compound Poisson processes. Then, D\"oring and Kyprianou are able to prove the following interesting result: 

\begin{equation}\label{e:DK-Test}
\mathbb{P}\biggl(\int_0^{\infty}f(\xi_s)\,ds<\infty\biggr)=0\, \,\text{if and only if}\,\,\int^{\infty}f(s)\,ds = \infty,
\end{equation}
where in the aforementioned paper $\int^{\infty}$ stands to emphasize that the integrability is determined only in a neighbourhood of infinity as it is the case in our work too.

First we want to stress that the integral with respect to the Lebesgue measure to the right-hand side of \eqref{e:DK-Test} obviously can not be true in the case of compound Poisson processes and therefore was excluded from \eqref{e:DK-Test} in the paper \cite{doring}. If e.g. the process lives on the grid $\alpha \mathbb{N}$ with $\alpha >0$, then the non-negative continuous function 
\begin{displaymath}
f(x):= 1+\sin\biggl(\frac{3\pi}{2}+\frac{2\pi x}{\alpha}\biggr)
\end{displaymath}
will provide an easy counterexample. The Lebesgue measure is obviously the wrong measure in this situation, whereby $U$ naturally captures which sets are visited by the process.

In order to see the connection to Theorem \ref{main_theorem} in this situation we first observe that since the local time is assumed to  exist then single points are not essentially polar and hence the potential measure $U(dx)$ is absolutely continuous, whose canonical version of the density $u$ is bounded, see \cite[Chapter II, Theorem 16]{bertoin}. Then from \cite[Chapter II, Corollary 18]{bertoin} we have with 
\begin{equation*}
\sigma^x=\inf\curly{s\geq 0:\xi_s=x}
\end{equation*}
that
\begin{equation}\label{eq:hit}
\begin{split}
&\Pbb{\sigma^x<\infty}=\frac{u(x)}{u(0)}.
\end{split}
\end{equation}
For the first passage time 
 \begin{equation*}
	T^x=\inf\{s \geq 0:\xi_s\geq x\}
	\end{equation*} 
	let us now denote by  
\begin{equation*}
\Oc_x=\xi_{T^x}-x 
\end{equation*}
the overshoot of the process. Since $\mu\in\lbrb{0,\infty}$ it is known  that in distribution $\limi{x}\Oc_x=\Oc$ with $\Oc$ being a proper random variable with support in $\lbbrb{0,\infty}$, see \cite[Lemma 3]{bertoinS}. Moreover, either $\Oc$ has an atom at zero and/or there is an interval $\lbrb{0,a}$ on which it has non-increasing positive density, see again \cite[Lemma 3, $\rho_2$ in their notation]{bertoinS}. Therefore,
\begin{equation}\label{eq:stat}
\begin{split}
A&=\liminfi{x}\frac{u(x)}{u(0)}=\liminfi{x}\Pbb{\sigma^x<\infty}\\
&=\Pbb{\Oc=0}+\liminfi{x}\lbrb{ \int_{0+}^{\infty}\Pbb{\sigma^{-y}<\infty}\Pbb{\Oc_x\in dy}}\\
&=\Pbb{\Oc=0}+\liminfi{x}\lbrb{ \int_{0+}^{\infty}\frac{u(-y)}{u(0)}\Pbb{\Oc_x\in dy}}.
\end{split}
\end{equation} 
Clearly, when $\Pbb{\Oc=0}>0$ then $A>0$. The latter is also true if $\Pbb{\Oc=0}=0$. Indeed, note that first, the density of $\Oc$ is strictly positive and non-increasing on $\lbrb{0,a}$, and moreover from \cite[Chapter I, Proposition 12 and Proposition 11]{bertoin} we get that $u(-x)$ is excessive and therefore lower semi-continuous. Second, $U([-a,0])>0$ implies the existence of $y_0\in\lbrb{0,a}$ such that $h=u(-y_0)>0$ and from the lower semi-continuity $u(-y)>h/2$ on $y\in\lbrb{y_0-\eta,y_0+\eta}$ for some $\eta>0$. Finally, we can choose $\eta$ small enough that $(y_0-\eta,y_0+\eta)\subset\lbrb{0,a}$, which in turn gives from the Portmanteau's theorem that
\begin{equation*}
\begin{split}
\liminfi{x}\lbrb{ \int_{0+}^{\infty}\frac{u(-y)}{u(0)}\Pbb{\Oc_x\in dy}}&\geq \frac{h}{2}\liminfi{x}\Pbb{\Oc_x\in\lbrb{y_0-\eta,y_0+\eta}}\\
&\geq \frac{h}{2}\Pbb{\Oc\in\lbrb{y_0-\eta,y_0+\eta}}>0.
\end{split}
\end{equation*}
Thus, $A>0$ and the latter combined with \eqref{eq:stat} clearly yields that for some $\epsilon>0$ and $b>0$  
\begin{equation}\label{eq:new}
\begin{split}
&\Pbb{\sigma_x<\infty}\geq \epsilon,\quad \text{ for all $x\geq b$}.
\end{split}
\end{equation}
Set $C_x=\curly{\sigma_x<\infty}$. Then we can show that the structure of the $\Pb^x$-transient sets is very simple. We have that for any $x\in\Rb$, a set $E\subseteq \Rb$ is $\Pb^x$-transient if and only if there exists a finite $K$ such that $E\cap \lbrb{K,\infty}=\emptyset$.  This can be demonstrated in the following way. Let $E=\lbrb{x_n}_{n\geq 1}$ be any increasing sequence of positive numbers that converges to infinity. Without loss of generality we can assume that $x_1>b$. Then, $\Pbb{C_{x_{n}}}\geq \epsilon, \forall n\geq 1,$ and henceforth $\Pbb{C_{x_n}\text{ i.o.}}\geq \epsilon.$ This means that $\Pbb{\xi\text{ visits $E$ i.o.}}\geq \epsilon$ and from Definition \ref{def:tra} we conclude that $E$ cannot be transient as we have a zero-one law for it.
  The same argument shows that it is not $\Pb^x$-transient for any $x\in\Rb$. Therefore, any transient set is bounded away from plus infinity. The criterion of Theorem \ref{main_theorem} henceforth boils down to $\int_{0}^{\infty} f(x+y)u(y)dy<\infty$ but since above $A>0$, see \eqref{eq:stat}, and from \eqref{eq:hit} $u$ is bounded in the supremum norm, we see that this is equivalent to the condition in \cite[Theorem 1]{doring}, that is $\int_{0}^{\infty} f(y)dy<\infty$.

We point out that in this setting Corollary \ref{main_cor} holds for any non-negative, locally integrable function $f$. To briefly illustrate why, we recall the set $L^+_a(q)$ defined in \eqref{eq:L}, for every $q\in\lbbrb{0,1}$ and $a>0$, namely
\[L^+_a(q):=\curly{x\in\Rb^+:\Pbb{\int_{0}^{\infty}f\lbrb{x+\xi_s}ds>a}\leq q}.\]
Then in Remark \ref{rem:DK} below it is shown that for every $q\in\lbbrb{0,1}$ there is $a(q)>0$ such that for $a\geq a(q)$ the set $L^+_a(q)$ is closed for any non-negative, locally integrable function $f$. Thus, the requirement of the first item of Lemma \ref{lem:inf} is always satisfied and hence Corollary \ref{main_cor} holds for any non-negative, locally bounded function $f$. Let us assume that $f$ is locally integrable. Then the existence of local time easily yields that $\int_{0}^{t}f(x+\xi_s)ds<\infty$ almost surely for all $t>0, x\in \Rb$. Indeed, let $T^{[-a,a]}=\inf\curly{s\geq 0: \xi_s\notin\lbbrbb{-a,a}}$ and $L(t,x)$ be a non-decreasing in $t$ version of the local time. Then, for any $a>0$,
\begin{equation*}
\begin{split}
&	\int_{0}^{T^{[-a,a]}}f(x+\xi_s)ds\leq \int_{-a}^{a}f(x+y)L(\infty,y)dy	    
\end{split}
\end{equation*}
with 
\[\Ebb{\int_{-a}^{a}f(x+y)L(\infty,y)dy}=\int_{-a}^{a}f(x+y)u(y)dy<\infty\]
since $u$ is bounded in the supremum norm. Therefore, $\int_{0}^{T^{[-a,a]}}f(x+\xi_s)ds$ is almost surely finite for any $a>0$ and hence $\int_{0}^{t}f(x+\xi_s)ds<\infty$ almost surely for all $t>0$. Thus, Corollary \ref{main_cor} is again applicable.

When $0$ is regular for itself, see \cite[Chapter II]{bertoin} for the definition, then $u$ is continuous everywhere and one can take a limit in \eqref{eq:stat}. Thus, this special case is easier to deal with.

In \cite{doring} the authors speculate that the finiteness of $\int_{0}^{\infty} f(y)dy$ (note that they use $\int^{\infty} f(y)dy<\infty$, which due to local integrability is equivalent condition) might be necessary and sufficient for the almost sure finiteness of the corresponding additive functional 
\begin{displaymath}
\int\limits_{0}^{\infty}f(\xi_s)ds
\end{displaymath}
even without the assumption of $\xi$ having a local time. We provide an example when it is not the case. Take $\xi$ to be an increasing \LL process with no drift and jumps of infinite activity and size at most $1$.  Thus, $\xi$ has a finite mean and it does not possess a local time. Then, as above, in distribution $\limi{x}\Oc_x=\limi{x}\xi_{T^x}-x=\Oc$ but $\Oc$ has no mass at zero since the process does not creep up in this case, see \cite[Chapter III, Theorem 5]{bertoin}. Therefore, there is a sequence of numbers $\lbrb{x_n}_{n\geq 1}$ increasing to infinity and a sequence of positive numbers  $\curly{\epsilon_n}_{n\geq 1}$ decreasing to zero such that, for any $n\geq 1$,
\begin{equation}\label{eq:strip}
\begin{split}
&\Pbb{\Oc_{y}\in\lbrb{0,\epsilon_n}}\leq 2\Pbb{\Oc\in\lbrb{0,\epsilon_n}}\leq \frac{1}{n^2} \text{ for all  $y\geq x_n$.}
\end{split}
\end{equation}
Then, we construct the sequence of open non-intersecting intervals $\lbrb{\lbrb{\alpha_n,\beta_n}}_{n\geq 1}$ in the following way. We take $\alpha_1=x_1,\beta_1=x_1+\epsilon_1$. Next, we set $\alpha_2=\alpha_1+1+x_2,\beta_2=\alpha_2+\epsilon_2$. The reason for this choice comes from the fact that the overshoots of $\xi$ are at most of size $1$ since the jumps of the process are at most of size $1$. Thus, we ensure that $\xi_{T^{\alpha_1}}\leq \alpha_1+1$ and $\alpha_2-\xi_{T^{\alpha_1}}\geq x_2$. We do this recursively by setting $\alpha_{n+1}=\alpha_n+1+x_{n+1},\beta_{n+1}=\alpha_{n+1}+\epsilon_{n+1}$. Set $\Rb\setminus
E=\bigcup_{n=1}^\infty\lbrb{\alpha_n,\beta_n}$. Clearly, using successively the Markov property we arrive at
\begin{equation*}
\begin{split}
&\Pbb{\bigcup_{t\geq 0}\bigcap_{s>t}\curly{\xi_s\in 
		E}}=1-  \Pbb{\xi_{T^{\alpha_n}}\in\lbrb{\alpha_n,\beta_n} \text{ infinitely often }}\\
&\geq 1-\limi{n}\sum_{k=n}^{\infty}\Pbb{\xi_{T^{\alpha_n}}\in\lbrb{\alpha_n,\beta_n}}\geq 1-\limi{n}\sum_{k=n}^{\infty}\Pbb{\Oc_{\alpha_n}\leq \epsilon_n}\\
&\geq 1- \limi{n}\sum_{k=n}^{\infty}\frac{1}{k^2}=1.
\end{split}
\end{equation*}
Therefore, $\Rb\setminus
E$ is $\Pb^0$-transient. We take non-negative continuous functions $f_n,n\geq 1,$ each supported in $\lbrb{\alpha_n,\beta_n}$ respectively and such that $\int_{0}^{\infty} f_n(x)dx=1$. Define $f=\sum_{n=1}^{\infty }f_n$ and note that $\int_{0}^{\infty} f(x)dx=\infty$. On the other hand
\[\int_{E}f(y)U(dy)=0\]
since $f=0$ on $E$ by definition. From Theorem \ref{main_theorem} we get that 
\[\mathbb{P}\left( \int\limits_{0}^{\infty}f(\xi_s)ds < \infty \right) = 1,\]
which would contradict the integral test \eqref{e:DK-Test} of D\"oring and Kyprianou.
\subsection*{Relations to \cite{ErMa}:}
Let $f$ be ultimately non-increasing. In the case when $\Ebb{\xi_1}\in\lbrb{0,\infty}$ then the criterion of \cite[Theorem 1, (1.4)]{ErMa} has been  easily shown that to be equivalent to
\[\int_{0}^{\infty}f(\xi_s)ds<\infty \iff \int_{0}^{\infty} f(y)dy<\infty,\]
see \cite[p.82, below (3.2)]{ErMa}. 
From the Blackwell's theorem, see \cite[Chapter I, Theorem 21]{bertoin},  and the fact that $f$ is non-increasing, it is easily seen that
\begin{equation*}
\begin{split}
&\int_{0}^{\infty} f(y)dy<\infty\iff \int_{0}^{\infty} f(y)U(dy)<\infty.
\end{split}
\end{equation*}
Hence, from Theorem \ref{main_theorem} we deduce that
\begin{equation*}
\begin{split}
&\int_{0}^{\infty} f(y)dy<\infty\implies \int_{0}^{\infty}f(\xi_s)ds<\infty.
\end{split}
\end{equation*}
Assume that $\int_{0}^{\infty} f(y)dy=\infty$ and yet $\int_{0}^{\infty} f(\xi_s)ds<\infty$. Since $\xi$ spends a finite amount of time below any $l>0$ we can consider $f\mathbb{I}_{[l,\infty)}$ instead of $f$  with $l$ such that $f$ is non-increasing on $\lbbrb{l,\infty}$. Assume then that 
\[\int_{0}^{\infty} f\mathbb{I}_{[l,\infty)}(\xi_s)ds<\infty.\]
Then, from monotonicity,  for any $q\in\lbrb{0,1}$ there is $a_q>0$ such that
\[L^+_a(q)=\curly{x\geq 0: \Pbb{\int_{0}^{\infty} f\mathbb{I}_{[l,\infty)}(x+y)dy>a_q}\leq q}=\lbbrb{0,\infty}.\]
Lemma \ref{lemma3} applied with $f\mathbb{I}_{[l,\infty)}\mathbb{I}_{L^+_a(q)}=f\mathbb{I}_{[l,\infty)}$ and $x=0$ then shows that
\begin{equation*}
\begin{split}
&\Ebb{\int_{0}^{\infty}f\mathbb{I}_{[l,\infty)}(\xi_s)ds}<\infty
\end{split}
\end{equation*} and hence 
\[\int_{0}^{\infty}f\mathbb{I}_{[l,\infty)}(y)U(dy)<\infty.\] 
We  therefore arrive at contradiction with $\int_{0}^{\infty} f(y)dy=\infty$. Thus, we recover the criterion by Erickson and Maller when $\Ebb{\xi_1}\in\lbrb{0,\infty}$. 

We point out that \cite[Theorem 1]{ErMa} covers the case when $\limi{t}\xi_t=\infty$ almost surely and $\Ebb{\abs{\xi_1}}=\infty$. The last argument above implies in the same fashion that
\begin{equation*}
\begin{split}
&\int_{0}^{\infty} f(\xi_s)ds<\infty\implies \int_{0}^{\infty}f\mathbb{I}_{[l,\infty)}(y)U(dy)<\infty
\end{split}
\end{equation*}
for some $l>0$. Since $f$ is non-increasing then  $\limi{x}f(x)=0$ and hence by integration by parts
\begin{equation*}
\begin{split}
&\int_{0}^{\infty}f\mathbb{I}_{[l,\infty)}(y)U(dy)=\int_{l}^{\infty}U([0,y])df(y)<\infty.
\end{split}
\end{equation*}
The last condition is in line with \cite[Theorem 1]{ErMa}.

\section{Proof of Theorem \ref{main_theorem}}
Throughout the proofs we use $f$ to denote a non-negative, locally integrable function and we shall specify additional conditions if needed. We also use the notation
\begin{equation}\label{e:Ix}
I^x := \int\limits_{0}^{\infty}f(x + \xi_s)ds \in [0, \infty].
\end{equation}
We will drop the index $x$ altogether when $x = 0$. According to \cite[Lemma 5]{doring} we first recall that 
\begin{equation*}
\forall{x} \in \mathbb{R}: \, \mathbb{P}(I^x < \infty) \in \{0, 1\}.
\end{equation*} 
Let us define for every $a \in [0,\infty]$ the stopping time
\begin{equation}\label{e:treffer}
T^x_a := \inf{ \{ t > 0 : I^x_t = a  \} }\in\lbbrbb{0,\infty},
\end{equation}
where we have set
\begin{equation*}
I^x_t = \int\limits_{0}^{t}f(x+\xi_s)ds.
\end{equation*}
Observe that  $I^x_{\infty} = I^x$ according to \eqref{e:Ix}.
We start with the obvious lemma, for which we do not provide a proof.
\begin{lemma}
	Let $f$ be a non-negative, locally integrable function. For every $x\in\Rb$ and every realization of the underlying L\'{e}vy process $\xi$ the integral $I^x_t$ is a continuous function for $t\leq \inf\{s\geq0: I^x_s=\infty\}\in\lbrbb{0,\infty}$.
\end{lemma}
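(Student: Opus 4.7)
The plan is to reduce the claim to the standard fact that indefinite Lebesgue integrals of locally integrable functions are absolutely continuous, applied path by path.

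Fix a realization of $\xi$ and $x\in\Rb$, and set $T=\inf\{s\geq 0 : I^x_s=\infty\}\in\lbrbb{0,\infty}$. First I would check that $s\mapsto f(x+\xi_s)$ is Borel measurable: $\xi$ being c\`adl\`ag is Borel measurable as a function of $s$, and $f$ is Borel (being locally integrable), so the composition is Borel on $[0,\infty)$. Next, for every $t<T$ the definition of $T$ gives $\int_{0}^{t}f(x+\xi_s)ds<\infty$, so $s\mapsto f(x+\xi_s)$ is integrable on $[0,t]$.

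Then I would invoke absolute continuity of the Lebesgue integral: for any integrable $g\geq 0$ on $[0,t]$, the map $u\mapsto \int_{0}^{u}g(s)ds$ is continuous (indeed absolutely continuous) on $[0,t]$. Applied to $g(s)=f(x+\xi_s)\mathbf{1}_{[0,t]}(s)$, this yields continuity of $u\mapsto I^x_u$ on $[0,t]$. Since $t<T$ is arbitrary, $I^x_{\cdot}$ is continuous on $[0,T)$.

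It remains to handle the right endpoint $T$ when $T<\infty$. By monotone convergence $I^x_t=\int_{0}^{t}f(x+\xi_s)ds\uparrow \int_{0}^{T}f(x+\xi_s)ds = I^x_T$ as $t\uparrow T$, and by definition of $T$ we have $I^x_T=\infty$, so $I^x_t\to\infty=I^x_T$; this gives left-continuity at $T$ in the extended-real sense. (If $T=\infty$, no endpoint issue arises.) There is no serious obstacle here — the only mild subtlety is making sure that ``locally integrable'' is used correctly: local integrability of $f$ on $\Rb$ is not directly invoked, since continuity of $I^x_\cdot$ on $[0,T)$ follows solely from integrability of $s\mapsto f(x+\xi_s)$ on each subinterval $[0,t]\subset [0,T)$, which is automatic from the definition of $T$.
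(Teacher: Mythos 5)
Your proof is correct; the paper itself offers no argument here, explicitly calling this ``the obvious lemma, for which we do not provide a proof,'' and your reduction to absolute continuity of the indefinite Lebesgue integral on each $[0,t]$ with $t<T$, plus monotone convergence at the endpoint, is exactly the standard reasoning being taken for granted. One small inaccuracy: it is not true that $I^x_T=\infty$ by definition of $T$ --- since $I^x_\cdot$ is non-decreasing, the set $\{s: I^x_s=\infty\}$ could be the open interval $(T,\infty)$, in which case $I^x_T<\infty$; but this is harmless, because the monotone convergence step you already invoked gives $\lim_{t\uparrow T}I^x_t=I^x_T$ in the extended-real sense whether or not $I^x_T$ is finite, which is all the left-continuity at $T$ requires.
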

We recall the following definition of announceable stopping times (see e.g. Exercise 5.3 in \cite{bertoin}) as they play a key role in our study. 
\begin{definition}
	Let $\sigma$ be a stopping time. We call $\sigma$ an announceable stopping time if there exists a sequence of stopping times $\{\sigma_n\}_{n \in \mathbb{N}},$ such that  almost surely $\sigma_n < \sigma, \, \forall n\geq 1$ and $\sigma_n \to \sigma$, as $n \to \infty$.
\end{definition}

\noindent
Note that since until explosion $I^x_\cdot : [0, \infty) \to \mathbb{R}^+$ is a continuous function for every realization of the L\'{e}vy process $\xi$ and every $x\in\Rb$, we have the following result:

\begin{lemma}
	\label{announceable_stopping_time}
	For any $a > 0$ and any $x \in \Rb$ the stopping time $T^x_a$ -- defined in \eqref{e:treffer} -- is announceable. Moreover, the process $\xi$ is a.s. continuous at $T^x_a$ on $\{T^x_a < \infty\}$.
\end{lemma}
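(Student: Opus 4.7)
The approach is to exploit the fact that $t \mapsto I^x_t$ is continuous and non-decreasing (because $f\geq 0$) before explosion, together with the quasi-left-continuity of the L\'evy process at predictable times.

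First, I would produce an announcing sequence explicitly. Fix $a>0$ and $x\in\Rb$, and work on the event $\{T^x_a<\infty\}$ (on the complementary event one may take $\sigma_n=n$). For each $n\geq 1$ large enough that $a-1/n>0$, define
\[
\sigma_n := \inf\bigl\{t>0 : I^x_t \geq a - 1/n\bigr\}.
\]
Each $\sigma_n$ is a stopping time (as a hitting time of a closed set by the adapted continuous process $I^x_\cdot$), and by the continuity of $I^x_\cdot$ from the preceding lemma, we have $I^x_{\sigma_n}=a-1/n<a$, which forces $\sigma_n<T^x_a$. The sequence $(\sigma_n)$ is non-decreasing, so $\sigma^*:=\lim_n \sigma_n \leq T^x_a$ exists. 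If $\sigma^*<T^x_a$, continuity would give $I^x_{\sigma^*}=\lim_n I^x_{\sigma_n}=a$, contradicting the definition of $T^x_a$. Hence $\sigma_n\uparrow T^x_a$, so $T^x_a$ is announceable (equivalently, predictable).

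Second, for the a.s.\ continuity of $\xi$ at $T^x_a$ on $\{T^x_a<\infty\}$, I would invoke the quasi-left-continuity of the L\'evy process. Since a L\'evy process is a Feller process with c\`adl\`ag paths, it is quasi-left-continuous, which means that for any predictable stopping time $\sigma$ one has $\xi_{\sigma-}=\xi_\sigma$ almost surely on $\{\sigma<\infty\}$; see e.g.\ the standard reference in \cite{bertoin} on the total inaccessibility of the jump times of a L\'evy process. Applying this with $\sigma=T^x_a$, which is predictable thanks to the announcing sequence $(\sigma_n)$ constructed above, gives the continuity claim.

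The only subtlety is the first step: one must verify that $I^x_\cdot$ does not equal $a$ on a flat portion immediately at $T^x_a$, so that the levels $a-1/n$ are genuinely reached before $T^x_a$. This is ensured directly by the preceding lemma, because $I^x_\cdot$ is continuous (and by definition of the infimum in $T^x_a$, the value $a$ is not attained strictly before $T^x_a$), so no separate argument is required. I do not expect a real obstacle beyond quoting quasi-left-continuity correctly.
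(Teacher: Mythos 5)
Your proposal is correct and follows essentially the same route as the paper: your announcing sequence $\sigma_n=\inf\{t>0: I^x_t\geq a-1/n\}$ coincides (by continuity and monotonicity of $I^x_\cdot$) with the paper's choice $T^x_{a-1/n}$, the identification of the limit with $T^x_a$ uses the same continuity argument, and the final step is the same appeal to quasi-left-continuity of the L\'evy process at announceable times (\cite[Prop.~1.7]{bertoin}).
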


\begin{proof}
	The fact that $I^x_t$ is a continuous function in $t$ shows that $T^x_{a - \frac{1}{n}} < T^x_{a}, \, \forall n > \frac{1}{a},$ and $T^x_{a - \frac{1}{n}} \to \tilde{T}^x_{a}$ for some announceable stopping time $\tilde{T}^x_{a}$. From this it readily follows that $\tilde{T}^x_{a} \leq T^x_{a}$. However, since $f$ is non-negative, on $\{\tilde{T}^x_{a} < \infty \}\supseteq\{T^x_{a} < \infty \} $  we get that
	\[a = \lim_{n \to \infty}\left(a - \frac{1}{n}\right) = \lim_{n \to \infty} \int\limits_{0}^{T_{a - \frac{1}{n}}}{f(x+\xi_s) \, ds} = \int\limits_{0}^{\tilde{T}^x_{a}}{f(x+\xi_s) \, ds}.\]
	Therefore $\tilde{T}^x_{a}\geq T^x_{a}$ on $\{\tilde{T}^x_{a} < \infty\}$ and hence $\tilde{T}^x_{a}= T^x_{a}$. The claim about continuity of $\xi$ at  $T^x_{a}$ then follows from \cite[Prop. 1.7]{bertoin}.
\end{proof}
The next result gives an estimate on the first moment of $I^x_t$, which is inspired from \cite[Lemma 2.2.]{batty}. Note that it is crucial in the proof that the involved stopping times are announceable. For this purpose we introduce, for $t\in\lbrbb{0,\infty},a>0,$ the quantity
\begin{equation}\label{eq:}
\begin{split}
&\beta_{f, a,t}:=\inf_{y \in supp(f)}\Pb\lbrb{I^y_t\leq a}\in\lbbrbb{0,1}
\end{split}
\end{equation}
and we have the following result. 
%

\begin{lemma} \label{lemma2}
	Let $x \in \mathbb{R}$. Then, for any $a > 0,t\in\lbrbb{0,\infty}$
	\begin{equation}\label{eq:beta}
	\begin{split}
	&\beta_{f, a,t} \mathbb{E} \left[ I^x_t \right] \leq a \mathbb{P}\left[ I^x_t < \infty \right]
	\end{split}
	\end{equation}
	with the convention that $0\times \infty=0$ to the left-hand side of the claim above.
\end{lemma}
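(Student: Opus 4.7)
The plan is to adapt Batty's iterative argument by exploiting the strong Markov property of $\xi$ at the announceable hitting times $T^x_u$, $u>0$. The crux is the one-step estimate
\[ \mathbb{P}\bigl[T^x_{u+a}<t\bigr]\leq (1-\beta_{f,a,t})\,\mathbb{P}\bigl[T^x_{u}<t\bigr],\qquad u>0, \]
whose iteration combined with the layer-cake formula for $I^x_t$ will produce $\mathbb{E}[I^x_t]\leq a/\beta_{f,a,t}$, and hence the claim.

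To derive the one-step estimate, fix $u>0$. By Lemma~\ref{announceable_stopping_time} the stopping time $T^x_u$ is announceable and $\xi$ is a.s.\ continuous there on $\{T^x_u<\infty\}$. Because $I^x_\cdot$ accumulates strictly from below to the positive level $u$ at $T^x_u$, one can select a sequence $s_n\nearrow T^x_u$ with $f(x+\xi_{s_n})>0$, so $x+\xi_{s_n}\in supp(f)$; the continuity of $\xi$ at $T^x_u$ together with the closedness of $supp(f)$ then force $y:=x+\xi_{T^x_u}\in supp(f)$ almost surely on $\{T^x_u<\infty\}$. The strong Markov property at $T^x_u$ identifies $\{T^x_{u+a}<t\}\cap\{T^x_u<t\}$ with $\{\widetilde I^{y}_{t-T^x_u}>a\}\cap\{T^x_u<t\}$, where $\widetilde I^{y}$ is an independent copy of $I^{y}$. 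Monotonicity of $\widetilde I^{y}_\cdot$ and the very definition of $\beta_{f,a,t}$ yield
\[ \mathbb{P}\bigl[\widetilde I^{y}_{t-T^x_u}>a\,\big|\,\mathcal{F}_{T^x_u}\bigr]\leq \mathbb{P}\bigl[\widetilde I^{y}_t>a\bigr]\leq 1-\beta_{f,a,t}\qquad\text{a.s.}, \]
and averaging produces the one-step inequality.

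Iterating gives $\mathbb{P}[T^x_{v+ka}<t]\leq(1-\beta_{f,a,t})^k\mathbb{P}[T^x_v<t]$ for $v>0$ and $k\geq 0$. Assuming $\beta_{f,a,t}>0$ and using $\mathbb{P}[I^x_t>u]\leq\mathbb{P}[T^x_u<t]$, a splitting of the layer-cake representation at multiples of $a$ produces
\[ \mathbb{E}[I^x_t]\leq\sum_{k=0}^\infty\int_0^a\mathbb{P}[T^x_{v+ka}<t]\,dv\leq\sum_{k=0}^\infty(1-\beta_{f,a,t})^k\cdot a=\frac{a}{\beta_{f,a,t}}. \]
In particular $\mathbb{E}[I^x_t]<\infty$, which forces $\mathbb{P}[I^x_t<\infty]=1$, so that the claimed bound reads $\beta_{f,a,t}\mathbb{E}[I^x_t]\leq a=a\mathbb{P}[I^x_t<\infty]$. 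When $\beta_{f,a,t}=0$ the convention $0\cdot\infty=0$ renders the inequality trivial. The main delicate point is the a.s.\ membership $y=x+\xi_{T^x_u}\in supp(f)$: a jump of $\xi$ at $T^x_u$ could land outside $supp(f)$ and invalidate the uniform bound defining $\beta_{f,a,t}$, so the announceability of $T^x_u$ and the a.s.\ continuity of $\xi$ at $T^x_u$ provided by Lemma~\ref{announceable_stopping_time} are essential to the argument.
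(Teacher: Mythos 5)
Your argument is correct and rests on exactly the same pillars as the paper's proof: the announceability of $T^x_u$ from Lemma \ref{announceable_stopping_time} forcing $x+\xi_{T^x_u}\in supp(f)$ almost surely, the strong Markov property applied at that time, and the uniform bound over $supp(f)$ built into the definition of $\beta_{f,a,t}$. The bookkeeping, however, is organized differently. The paper applies the Markov step once per layer and sums the disjoint slices directly, $\mathbb{P}[I^x_t<\infty]\geq\sum_{n\geq0}\mathbb{P}(I^x_t\in(na,(n+1)a])\geq\beta_{f,a,t}\sum_{n\geq0}\mathbb{P}[T^x_{na}\leq t]\geq(\beta_{f,a,t}/a)\,\mathbb{E}[I^x_t]$, obtaining the stated inequality in a single chain without ever assuming $\beta_{f,a,t}>0$. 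You instead iterate the one-step estimate into the geometric decay $\mathbb{P}[T^x_{v+ka}<t]\leq(1-\beta_{f,a,t})^k$ and integrate; this is the classical Khas'minskii iteration, it yields the exponential tail bound as a useful byproduct (compare Remark \ref{eq:exp}), and it costs you only the extra observation that $\mathbb{E}[I^x_t]<\infty$ forces $\mathbb{P}[I^x_t<\infty]=1$, which you supply. One point should be tightened: the event $\{T^x_{u+a}<t\}\cap\{T^x_u<t\}$ is contained in $\{\widetilde I^{y}_{t-T^x_u}\geq a\}\cap\{T^x_u<t\}$ rather than being identified with $\{\widetilde I^{y}_{t-T^x_u}> a\}\cap\{T^x_u<t\}$, so your one-step bound implicitly uses $\mathbb{P}[I^y_t\geq a]\leq 1-\beta_{f,a,t}$, which can fail if $I^y_t$ charges the point $a$. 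This is not a fatal gap: since $I^x_t$ has at most countably many atoms, for Lebesgue-almost every $v\in(0,a)$ none of the levels $v+ka$, $k\geq0$, is an atom, the iteration then runs with strict inequalities for such $v$, and this is all that the layer-cake integral $\sum_{k}\int_0^a\mathbb{P}[I^x_t>v+ka]\,dv$ requires; but the caveat deserves a sentence (the paper's own chain contains the mirror image of the same boundary issue in passing from $\{I^x_t\in(na,(n+1)a]\}$ to $\{T^x_{na}\leq t,\ I^x_t\leq(n+1)a\}$).
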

\begin{remark}
	Observe that Lemma \ref{lemma2} in particular demonstrates that the assumptions $\beta_{f,a,t} > 0$ and $\mathbb{P}\left[ I^x_t < \infty \right]>0$ actually imply that the random variable $I^x_t$ even has a finite first moment.
\end{remark}
\begin{proof}
	First, we note that since $I^x_t$ is a continuous function in $t$ for every realization of the L\'{e}vy process $\xi$ and the process itself is a.s. continuous at the (announceable) stopping times $T^x_a$, see Lemma \ref{announceable_stopping_time},  then for every $n \in \mathbb{N}$ on $\{T^x_{an} < \infty\}$ almost surely $x+\xi_{T^x_{an}} \in supp(f)$. 
	Clearly, for $a > 0$, we have that
	\begin{equation*}
	\begin{split}
	\mathbb{P} \left[ I^x_t < \infty \right] &\geq \sum\limits_{n \geq 0}{\mathbb{P}(I^x_t \in (na, (n+1)a])} \\
	&= \sum\limits_{n \geq 0}{\mathbb{P}(T^{x}_{na}\leq  t, \, I^x_t \leq (n + 1)a)} \\
	&= \sum\limits_{n \geq 0} \Pbb{ T^x_{na} \leq t, \int_{T^x_{na}}^{t}f(x+\xi_s)ds \leq a}\\
	& \geq \sum\limits_{n \geq 0} \Pbb{  T^x_{na} \leq  t, \int_{T^x_{na}}^{T^x_{na} + t}f(x+\xi_s)ds \leq a } \\
	&=\sum\limits_{n \geq 0} \mathbb{E}\left[ \mathbb{I}_{\curly{T^x_{na} \leq  t}} \mathbb{P}\left[ I^{x+{\xi_{T^x_{na}}}}_t \leq a \right] \right] \geq \beta_{f, a,t} \sum\limits_{n \geq 0} \mathbb{P} \left[ T^x_{na} \leq  t \right]\\
	& = \beta_{f, a,t}\sum\limits_{n \geq 0}\mathbb{P}\left[ I^x_t \geq na \right] 
	\geq\frac{\beta_{f, a,t}}{a} \mathbb{E}[I^x_t],
	\end{split}
	\end{equation*}
	which ultimately leads to the stated inequality.
\end{proof}
\noindent
For any $a > 0, \, x \in \mathbb{R}$, we now introduce the functions
\begin{equation}\label{eq:G}
G_{a}(x) := \mathbb{P}(I^{x} > a)
\end{equation}
and the sets
\begin{equation}\label{eq:L}
L_{a}(q) := \{x \in \mathbb{R} : G_{a}(x) \leq q\},\quad L^+_{a}(q) := \{x \in \Rb^+ : G_{a}(x) \leq q\},
\end{equation}
where $q \in [0, 1]$ and $\Rb^+=\lbbrb{0,\infty}$. For the remaining parts of the proof of Theorem \ref{main_theorem} it will be essential that for $q\in\lbbrb{0,1}$ and for $a>0$ the sets $L_a(q)$ or/and $L^+_{a}(q)$ are closed. Here further properties of the function $f$ seem to be required. Our next result provides sufficient conditions for the set $L_a(q)$ and $L^+_{a}(q)$ to be closed.
\begin{lemma}\label{lem:L}
	Suppose the locally integrable and non-negative function $f$ is the pointwise limit on a Borel set $L\subseteq \Rb$, that is $f=\limi{n}f_n$,  such that 
	\begin{equation*}
	\forall s>0,x\in\Rb:\, \,\Pbb{x+\xi_s\in L^c}=0
	\end{equation*}
	and where $\{ f_n \}_{n = 1}^{\infty}$ is a non-decreasing sequence of non-negative simple functions  of the type 
		\begin{equation}\label{eq:fn}
	\begin{split}
	&	f_n=\sum\limits_{i = 0}^{k(n)}{c^{(n)}_i \mathbb{I}_{(\alpha^{(n)}_i, \beta^{(n)}_i)}},	    
	\end{split}
	\end{equation}
	with $c^{(n)}_i\geq 0$ and $\{ (\alpha^{(n)}_i, \beta^{(n)}_i) \}_{i = 1}^{k(n)}$ are mutually disjoint. Then the sets $L_a(q)$ and $L^+_a(q)$  ($q \in [0, 1)$) are closed.
\end{lemma}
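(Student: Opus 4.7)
The plan is to establish lower semi-continuity of $G_a$, which is equivalent to each sub-level set $\{G_a \leq q\}$ being closed. I would approximate $I^x$ by integrals against the simple functions $f_n$, and exploit that every $f_n$, as a nonnegative linear combination of indicators of open intervals, is itself lower semi-continuous, while $f$ in general need not be.

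First, set $I^x_n := \int_0^\infty f_n(x+\xi_s)\,ds$ and $G_{n,a}(x) := \Pbb{I^x_n > a}$. Applying Fubini to the hypothesis $\Pbb{x+\xi_s \in L^c}=0$ for every $s>0$, the random time set $\{s \geq 0 : x+\xi_s \notin L\}$ is Lebesgue-null almost surely. Since $f_n \nearrow f$ on $L$, monotone convergence yields $I^x_n \nearrow I^x$ almost surely, and continuity of measure along the increasing sequence of events $\{I^x_n > a\}$ gives $G_a(x) = \lim_n G_{n,a}(x) = \sup_n G_{n,a}(x)$.

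Next, I would establish lower semi-continuity of each $G_{n,a}$. Because each interval $(\alpha^{(n)}_i, \beta^{(n)}_i)$ is open and the coefficients $c^{(n)}_i$ are nonnegative, $f_n$ is lower semi-continuous on $\Rb$, so for any sequence $x_m \to x$ we have the pathwise bound $\liminf_m f_n(x_m+\xi_s) \geq f_n(x+\xi_s)$. Fatou's lemma then gives $I^x_n \leq \liminf_m I^{x_m}_n$ a.s., whence $\{I^x_n > a\} \subseteq \liminf_m \{I^{x_m}_n > a\}$, and Fatou applied to the indicator sequence yields $G_{n,a}(x) \leq \liminf_m G_{n,a}(x_m)$.

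Finally, as a supremum of a family of lower semi-continuous functions, $G_a$ is itself lower semi-continuous, so $L_a(q) = \{G_a \leq q\}$ is closed; $L^+_a(q) = L_a(q) \cap \lbbrb{0,\infty}$ is then closed as the intersection of two closed sets. I expect the most delicate step to be the Fubini--monotone convergence passage, since this is the only place where the assumption on $L$ is genuinely used and it is what licenses the transfer from the tractable LSC approximants $f_n$ to the general $f$, which may behave badly off $L$; the remaining LSC--Fatou chain is then essentially mechanical.
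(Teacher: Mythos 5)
Your proposal is correct and follows essentially the same route as the paper: the double application of Fatou's lemma exploiting lower semi-continuity of the simple approximants $f_n$, and the Fubini/monotone-convergence step using the hypothesis on $L$ to pass to $f$. The only cosmetic difference is that you phrase the conclusion as lower semi-continuity of $G_a = \sup_n G_{n,a}$, whereas the paper states the equivalent set identity $L_a^f(q) = \bigcap_n L_a^{f_n}(q)$.
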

\begin{proof}
	In the subsequent proof we will add an additional superscript $f$ to our notation in order to make the dependence on properties of $f$ explicit.
	
	Let us first assume that  $f = \mathbb{I}_{(\alpha, \beta)},\alpha<\beta$ and let $\{ x_i \}_{i = 1}^{\infty}$ be a sequence in $L_a^f(q)$ converging to $x_0$. Applying Fatou's lemma we get that
	\begin{equation*}
	\begin{split}
	&\liminf_{i \to \infty}I^{x_i, f}=\liminf_{i \to \infty}{\int_{0}^{\infty}{\mathbb{I}_{ (\alpha, \beta)}(x_i + \xi_s )ds}} \geq \int_{0}^{\infty}{\liminf_{i \to \infty}\mathbb{I}_{ (\alpha, \beta)}(x_i + \xi_s )}ds\geq I^{x_0, f}.
	\end{split}
	\end{equation*}
	From the definition of $L_a^f(q)$ and from another application of Fatou's lemma, see \eqref{eq:L}, this leads to
	\begin{equation*}
	\begin{split}
	&q \geq \liminf_{i \to \infty}{\mathbb{P}\left[ I^{x_i,f}>a \right]}  \geq \mathbb{P}(I^{x_0, f}>a)=G^f_a(x_0),
	\end{split}
	\end{equation*}
	which proves that $x_0 \in L_a^f(q)$ hence $L_a^f(q)$ is closed.  
	
	In the second step assume that $f$ is a simple non-negative function of the form $f = \sum\limits_{i = 0}^{n}{c_i \mathbb{I}_{(\alpha_i, \beta_i)}}$, where we assume that the sets $\{ (\alpha_i, \beta_i) \}_{i = 1}^n$ are mutually disjoint. Then the very same argument is applicable and thus $L_a^f(q)$ is again closed. 
	
	Finally, let $f$ be any non-negative, locally integrable function such that on $L\subseteq \Rb$ it holds that 
	$\limi{n}f_n=f$, where  the non-negative functions $\{ f_n \}_{n = 1}^{\infty}$ are defined in \eqref{eq:fn} and $\Pbb{x+\xi_s\in L^c}=0$ for all $x\in\Rb$ and $s>0$.
	  We prove that
	\begin{equation}\label{eq:LL}
	\begin{split}
	&L_a^f(q) = \bigcap\limits_{n = 1}^{\infty}{L_a^{f_n}(q)}
	\end{split}
	\end{equation}
	and therefore the set  $L_a^f(q)$ is closed. Since we approximate $f$ with a non-decreasing sequence of functions it is clear that $I^{x,f_n}\leq I^{x,f}$ for any $n\geq 1$ and therefore
	\begin{equation*}
	\begin{split}
	&L_a^f(q) \subseteq \bigcap\limits_{n = 1}^{\infty}{L_a^{f_n}(q)}.
	\end{split}
	\end{equation*}
	Take a point $x \in \bigcap\limits_{n = 1}^{\infty}{L_a^{f_n}(q)}$. Hence, for every $n \in \mathbb{N}$ it holds that $\mathbb{P}(I^{x,f_n} > a) \leq q$.  Define the sets
	\[A_n = \{I^{x,f_n} > a\}.\]
	Due to the property of the approximating  sequence $\curly{f_n}_{n\geq 1}$ the sequence of sets $\curly{A_n}_{n\geq 1}$
	is increasing. Also, we have that  almost surely
	\begin{equation*}
	\int_{0}^{\infty} f(x+\xi_s)\mathbb{I}_{L^c}(x+\xi_s)ds=\sup_{n\geq 1}\int_{0}^{\infty} f_n(x+\xi_s)\mathbb{I}_{L^c}(x+\xi_s)ds=0
	\end{equation*}
	since by assumption
	\begin{displaymath}
	\int_{0}^{\infty} \Pbb{x+\xi_s\in L^c}ds=0. 
	\end{displaymath}
	Therefore, we conclude that $I^{x,f}=I^{x,f\mathbb{I}_{L}}, I^{x,f_n}=I^{x,f_n\mathbb{I}_{L}}$ simultaneously almost surely and thus using the monotone convergence theorem we finally arrive at 
	\[\Pbb{\bigcup\limits_{n = 1}^{\infty}{A_n}} =\Pb\{I^{x,f} > a\}.\]
	Hence, $x\in L_a^f(q)$ and \eqref{eq:LL} holds true.
	Summarizing we have shown, that under the assumptions above the sets $L^f_a(q)$ is closed. When $L^f_a(q)$ is closed clearly $L^{f,+}_a(q)$ is closed too. This settles the claim of this lemma. 
\end{proof}
\begin{corollary}\label{rem:classf}
	Assume that $f$ has support $[l,\infty)$ for some $l \in \mathbb{R}$, and that $f|_{[l,\infty)}$ is either continuous or non-increasing. Then the sets $L^f_a(q)$ and $L^{f,+}_a(q)$  ($q \in [0, 1)$) are closed.
\end{corollary}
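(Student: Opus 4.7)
The plan is to verify the hypotheses of Lemma \ref{lem:L} in each of the two cases, from which the closedness of $L^f_a(q)$---and hence of $L^{f,+}_a(q)=L^f_a(q)\cap[0,\infty)$---follows. The argument splits naturally along the dichotomy in the statement.

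\emph{Continuous case.} Here I plan to shortcut Lemma \ref{lem:L} and argue directly via lower semi-continuity. Since $f|_{[l,\infty)}$ is continuous and $f\equiv 0$ on $(-\infty,l)$, after trivially redefining $f(l):=0$ if $f(l)>0$---a change at a single point which does not affect $I^{x,f}$ almost surely---the function $f$ becomes lower semi-continuous on $\Rb$. The first step in the proof of Lemma \ref{lem:L} uses nothing about $\mathbb{I}_{(\alpha,\beta)}$ beyond its lower semi-continuity, so two successive applications of Fatou's lemma (first to $f(x_i+\xi_s)$ inside the integral defining $I^{x_i,f}$, and then to the events $\{I^{x_i,f}>a\}$) yield $\Pbb{I^{x_0,f}>a}\leq\liminf_i\Pbb{I^{x_i,f}>a}\leq q$ whenever $x_i\to x_0$ with $x_i\in L^f_a(q)$. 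Thus $L^f_a(q)$ is closed.

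\emph{Non-increasing case.} Here I plan to apply Lemma \ref{lem:L} via an explicit dyadic approximation. Because $f$ is non-increasing on $[l,\infty)$ and zero below $l$, for each $n\geq 1$ and $1\leq k\leq n\cdot 2^n$ the super-level set $\{f>k/2^n\}$ takes the form $[l,b_{n,k})$ or $[l,b_{n,k}]$ with $b_{n,k}:=\sup\{y\in\Rb:f(y)>k/2^n\}$. The corresponding simple function
\begin{equation*}
f_n:=\frac{1}{2^n}\sum_{k=1}^{n\cdot 2^n}\mathbb{I}_{(l,b_{n,k})},
\end{equation*}
after rearranging the nested open intervals $(l,b_{n,k})$ into their disjoint annular pieces $(b_{n,k+1},b_{n,k})$, takes the form required by Lemma \ref{lem:L}. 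Moreover $\{f_n\}_{n\geq 1}$ is non-decreasing and $f_n\to f$ pointwise on $L:=\Rb\setminus D$, where $D:=\{l\}\cup\{b_{n,k}:n\geq 1,\,1\leq k\leq n\cdot 2^n\}$ is a countable set of endpoints.

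The main obstacle is the verification of the hypothesis $\Pbb{x+\xi_s\in D}=0$ for every $x\in\Rb$ and $s>0$ that Lemma \ref{lem:L} requires. Since $D$ is countable, this is automatic whenever the one-dimensional marginals of $\xi$ are non-atomic (covering every Lévy process with a Gaussian component or an infinite Lévy measure); in the atomic case the difficulty is removed by perturbing the threshold levels $k/2^n$ by a small amount so that the at-most-countable collection of resulting endpoints $b_{n,k}$ avoids the at most countable set of atoms of the shifted marginals $\xi_s-x$. Once this verification is in place, Lemma \ref{lem:L} yields the closedness of $L^f_a(q)$, whence $L^{f,+}_a(q)=L^f_a(q)\cap[0,\infty)$ is closed as well.
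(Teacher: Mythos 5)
Your continuous case is, in substance, the paper's own argument: the paper also bypasses Lemma \ref{lem:L} here and closes $L^f_a(q)$ by two applications of Fatou's lemma. The weak link is your preliminary reduction. Redefining $f(l):=0$ changes $I^{x,f}$ by $f(l)\cdot\mathrm{Leb}\{s:x+\xi_s=l\}$, whose expectation is $f(l)\,U(\{l-x\})$, and for a compound Poisson process the potential measure does charge points (for the rate-one Poisson process, $U(\{n\})=1$ for every $n\in\mathbb{N}_0$). Such processes are inside the paper's scope (see Remark \ref{rem:nonlattice} and the lattice discussion in Section \ref{sec:ex}), so "a change at a single point does not affect $I^{x,f}$ almost surely" is not a valid step. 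The reduction is harmless exactly when $f(l)=0$, which is automatic if $f$ is continuous on all of $\mathbb{R}$ --- the case needed for Theorem \ref{main_theorem} --- but is not implied by continuity of $f|_{[l,\infty)}$ alone. The obstruction is real rather than cosmetic: for the rate-one Poisson process and $f(y)=e^{-(y-5)}\mathbb{I}_{[5,\infty)}(y)$ one has $I^{5-\epsilon}\to I^5-T_0$ as $\epsilon\downarrow0$, with $T_0$ the exponential holding time at the starting point, so $G_a$ fails to be lower semi-continuous at $5$ and $L^{f,+}_a(q)$ is not closed for suitable $a$ and $q$. So this part of your argument cannot be repaired at the stated level of generality; it works under the additional hypothesis $f(l)=0$ or $U(\{y\})\equiv 0$.

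Your non-increasing case takes a genuinely different route from the paper, and the difference matters. The paper does not feed such an $f$ into Lemma \ref{lem:L}: it notes that monotonicity gives $[x,\infty)\subset L_a(q)$ for every $x\in L^f_a(q)$ with $x>l$, so that $L^{f,+}_a(q)$ contains a closed half-line, and then invokes Remarks \ref{rem:closed1} and \ref{rem:closed}, by which Lemmata \ref{lemma3} and \ref{lem:inf} only require closedness of $L^{f,+}_a(q)\cap[K,\infty)$ for some $K$; the literal closedness of $L^{f,+}_a(q)$ is never needed downstream. Your dyadic approximation is a sensible alternative for diffuse processes, but the fix you propose for the atomic case does not work: if $f$ is constant on a stretch ending at $v$, or jumps downward at $v$, then $b_{n,k}=v$ for a whole interval of levels, so no perturbation of the thresholds $k/2^n$ moves that endpoint, and if $v-x$ is an atom charged by $U$ (again the compound Poisson case) the hypothesis $\Pbb{x+\xi_s\in D}=0$ of Lemma \ref{lem:L} is unattainable --- and, as above, the closedness conclusion itself can fail there. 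To align with what the paper actually proves and uses, either restrict to processes whose potential measure has no atoms (then both of your arguments, including the countability of $D$, go through), or replace the closedness claim by the half-line statement and route through Remarks \ref{rem:closed1} and \ref{rem:closed}.
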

\begin{proof}
	We first note that the continuous functions fall into domain of Lemma \ref{lem:L}. However, in this case there is a more direct argument to prove closedness of since $L^f_a(q)$ and $L^{f,+}_a(q)$ from Fatou's lemma \[\liminf_{i \to \infty}I^{x_i,f}\geq I^{x,f}\] provided $\lim_{i \to \infty}x_i=x$.  Also, the ultimately non-increasing functions are captured by Lemma \ref{lem:L}. Let $f$ be non-increasing on $supp f=[l,\infty)$. In this case if $x\in L^f_a(q), x>l,$ then $\lbbrb{x,\infty}\subset L_a(q)$. Then, the lemmata below can be applied with $L^{f,+}_a(q)\cap \lbbrb{x,\infty}$, see Remarks \ref{rem:closed1} and \ref{rem:closed}.
\end{proof}
\begin{remark}\label{rem:nonlattice}
	It is worth noting that except in the case when the process $\xi$ lives on a lattice we can replace the open intervals  $(\alpha, \beta)$ used in the definition of $f_n, n\geq1,$ see \eqref{eq:fn} Lemma \ref{lem:L}, with closed ones, that is $[\alpha, \beta]$. Indeed the Lebesgue measure of $\{ \xi_s = \alpha - x_0\}$ or $\{ \xi_s = \beta - x_0 \}$ is $0$, which can be shown as follows: 
	\[0 = \mathbb{E}\left[\int_{0}^{\infty}\mathbb{I}_{\xi_s = \alpha - x_0}ds\right] = \int_{0}^{\infty}\mathbb{P}\left(\xi_s = \alpha - x_0\right)ds.\] 
\end{remark}
\begin{remark}\label{rem:DK}
	For any $q\in\lbrb{0,1}$ and all $a>0$ large enough we can explicitly compute the set $L_a(q)\cap\lbbrb{K,\infty}$ under the assumptions of  \cite{doring}, see Section \ref{sec:ex}, and $K>0$ depending only on $\xi$. We do this for every $q \in (0, 1)$ by allowing $a$ to depend on $q$. Note that we have shown that $A$ as defined in \eqref{eq:stat} is positive and therefore $C=u(0)/\inf_{x\geq K} u(x)>0$ for some $K>0$ and where $u$ is the density of the potential measure defined in \eqref{eq:potem}, see \cite[Corollary 2.36]{bertoin}. With these $K,C$ we prove that for every $q \in (0, 1)$ there is $a_q>0$ such that, for all $a\geq a_q$,
	\begin{equation}\label{eq:LK}
	\begin{split}
	&\operatorname*{sup}_{x \geq K} \mathbb{P}(I^x > a) \leq \mathbb{P}(I > a)\leq  \mathbb{P}(I > a_q)< C^{-1}q' =: q.
	\end{split}
	\end{equation}
	Let us sketch the proof. We note that with $x\in\Rb, x\neq 0$,
	\begin{equation}\label{eq:1}
	\begin{split}
	&I\geq \mathbb{I}_{\curly{\sigma^x<\infty}}\int_{\sigma^x}^{\infty}f(\xi_s)ds= \mathbb{I}_{\curly{\sigma^x<\infty}}\int_{0}^{\infty}f(x+\tilde{\xi}_s)ds,\\
	&I_x\geq \mathbb{I}_{\curly{\sigma^{-x}<\infty}}\int_{0}^{\infty}f(\tilde{\xi}_s)ds
	\end{split}
	\end{equation}
	where $\sigma^x=\inf\curly{s\geq 0:\xi_s=x}$ and $\tilde{\xi}_s=\xi_{s+\sigma^{\pm x}}-\xi_{\sigma^{\pm x}},s\geq 0$. Observe that the zero-one law, see \cite[Lemma 5]{doring}, implies that $I=\infty$ almost surely is equivalent to $I^x=\infty$ almost surely for any $x\in\Rb$. 
	 We note that when $x\geq K$ we have that
	\[\Pbb{\sigma^x<\infty}=\frac{u(x)}{u(0)}>C^{-1}>0.\]
	In the case $I=\infty$ almost surely we see that $L_a(q)\cap \lbbrb{K,\infty}=\emptyset$ for any $a>0$ and $q<1$. So assume that $I<\infty$ almost surely. The first relation \eqref{eq:1} then implies that for any $a>0$ and any $x\geq 0$
	\begin{equation*}
	\begin{split}
	&\Pbb{I>a}\geq \frac{u(x)}{u(0)}\Pbb{I^x>a}
	\end{split}
	\end{equation*}
	and when $x\geq K$
	\[C\Pbb{I>a}\geq \sup_{x\geq K}\Pbb{I^x>a},\]
	where recall that $C=u(0)/\inf_{x\geq 0} u(x)>0$. Since $\limi{a}\Pbb{I>a}=0$ for every $q\in\lbrb{0,1}$ we can choose $a_q$ such that
	\[ \sup_{x\geq K}\mathbb{P}(I^x > a_q) \leq q,\]
	which proves that for every $a \geq a_q$ we have that $L^+_a(q)\cap\lbbrb{K,\infty} = [K, \infty)$. 
\end{remark}

Lemma \ref{lemma2} allows to deduce the following Lemma.

\begin{lemma}
	\label{lemma3}
	If $L^+_{a}(q)$ is a closed set for some $a > 0$ and some $q \in [0, 1)$ then we have that 
	\begin{equation*}
	\mathbb{E}\left[ \int\limits_{0}^{\infty}(f\mathbb{I}_{L^+_{a}(q)})(x + \xi_s)ds \right] < \infty
	\end{equation*}
	for every $x \in \mathbb{R}$.
\end{lemma}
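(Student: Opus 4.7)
The plan is to apply Lemma \ref{lemma2} to the truncated function $\tilde{f} := f\mathbb{I}_{L^+_{a}(q)}$, which remains non-negative and locally integrable, so all preparatory results (continuity of $\tilde{I}^x_t := \int_0^t \tilde{f}(x+\xi_s)\,ds$ in $t$, announceability of the hitting times $\tilde{T}^x_b := \inf\{t>0 : \tilde{I}^x_t = b\}$, and a.s.\ continuity of $\xi$ at these times) carry over verbatim. Consequently Lemma \ref{lemma2} applied to $\tilde{f}$ yields, for every $t\in\lbrbb{0,\infty}$,
\[
\beta_{\tilde{f},a,t}\,\Ebb{\tilde{I}^x_t} \;\leq\; a\,\Pbb{\tilde{I}^x_t < \infty} \;\leq\; a.
\]

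The crux of the argument is to establish a uniform lower bound $\beta_{\tilde{f},a,t}\geq 1-q$. Since $\tilde{f}$ vanishes outside $L^+_{a}(q)$ and this set is closed by hypothesis, the closure of $\{y: f(y)>0\}\cap L^+_{a}(q)$ remains inside $L^+_{a}(q)$, so $\operatorname{supp}(\tilde{f})\subseteq L^+_{a}(q)$. For any $y\in L^+_{a}(q)$ the defining inequality $\Pbb{I^y>a}\leq q$ together with the pointwise bound $\tilde{I}^y_t \leq I^y_t \leq I^y$ gives
\[
\Pbb{\tilde{I}^y_t \leq a} \;\geq\; \Pbb{I^y \leq a} \;\geq\; 1-q,
\]
uniformly in $t$, and taking the infimum over $y\in\operatorname{supp}(\tilde{f})$ produces $\beta_{\tilde{f},a,t}\geq 1-q > 0$.

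Combining the two displays yields $\Ebb{\tilde{I}^x_t}\leq a/(1-q)$ for every $t>0$, and since $\tilde{f}\geq 0$ the map $t\mapsto \tilde{I}^x_t$ is non-decreasing, so monotone convergence transfers the bound to $t=\infty$, delivering
\[
\Ebb{\int_0^\infty (f\mathbb{I}_{L^+_{a}(q)})(x+\xi_s)\,ds}\;\leq\; \frac{a}{1-q}\;<\;\infty,
\]
which is the claim.

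The one step demanding care, and the place where the closedness hypothesis is genuinely used, is the verification that Lemma \ref{lemma2} can be invoked for $\tilde{f}$: inside that proof one plugs $y = x+\xi_{\tilde{T}^x_{na}}$ into the bound $\Pbb{\tilde{I}^y_t\leq a}\geq 1-q$, which is legitimate only if $x+\xi_{\tilde{T}^x_{na}}\in\operatorname{supp}(\tilde{f})$. Announceability of $\tilde{T}^x_{na}$ and continuity of $\xi$ at this time (Lemma \ref{announceable_stopping_time}, applied to $\tilde{f}$) force $x+\xi_{\tilde{T}^x_{na}}$ to be a limit of points with $\tilde{f}>0$; closedness of $L^+_{a}(q)$ is exactly what converts this limit back into an element of $L^+_{a}(q)$ so that the required inequality on $\Pbb{\tilde{I}^y_t\leq a}$ is available.
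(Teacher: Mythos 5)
Your proof is correct. It rests on the same two pillars as the paper's argument: Lemma \ref{lemma2} applied to $\tilde f:=f\mathbb{I}_{L^+_a(q)}$, and the lower bound $\beta_{\tilde f,a,\cdot}\geq 1-q$, which uses closedness of $L^+_a(q)$ exactly where you locate it (to guarantee $\operatorname{supp}(\tilde f)\subseteq L^+_a(q)$, so that the announced points $x+\xi_{\tilde T^x_{na}}$ land in a set on which $\Pbb{I^y\leq a}\geq 1-q$). Where you genuinely diverge is in handling the factor $\Pbb{\cdot<\infty}$ on the right-hand side of \eqref{eq:beta}: the paper invokes the lemma at $t=\infty$ and therefore must first prove that $\int_0^\infty\tilde f(x+\xi_s)\,ds<\infty$ almost surely for every $x$, which it does via a case distinction on whether $x\in L^+_a(q)$, the hitting time $\rho^x$ of $L^+_a(q)$, the strong Markov property at $\rho^x$ and the zero--one law. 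You instead apply the lemma at finite $t$, where $\Pbb{\tilde I^x_t<\infty}\leq 1$ costs nothing, obtain the uniform bound $\Ebb{\tilde I^x_t}\leq a/(1-q)$, and pass to $t=\infty$ by monotone convergence. This bypasses the paper's entire case analysis and yields the same explicit constant. The only point worth adding is the degenerate case $\operatorname{supp}(f)\cap L^+_a(q)=\emptyset$ (the infimum defining $\beta$ is then over the empty set), where $\tilde f$ vanishes identically and the claim is trivial; the paper records this separately at the end of its proof.
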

\begin{remark}\label{rem:closed1}
	Note that the statement of the lemma is also valid if $L^+_a(q)$ is replaced by $L^+_{a}(q)\cap [K,\infty)$, for some $K>0$, and the latter is assumed closed.
\end{remark}
\begin{proof}
	We first note that  
	\begin{equation*}
	\operatorname*{supp}(f\mathbb{I}_{L^+_{a}(q)}) = \operatorname*{supp}(f) \cap L^+_{a}(q)
	\end{equation*}
	if $L^+_{a}(q)$ is a closed set. \\
	We will distinguish the following two cases. First, let $x \in L^+_a(q)\subseteq L_{a}(q)$.  \\
	In this case we have that $G_a(x) = \mathbb{P}(I^{x} > a) \leq q < 1$ and using the $0-1$ law of \cite[Lemma 5]{bertoin} we can conclude that $I^x < \infty$ almost surely. Moreover, almost surely
	\[\int\limits_{0}^{\infty}(f\mathbb{I}_{L^+_{a}(q)})(x + \xi_s)ds\leq \int\limits_{0}^{\infty}(f\mathbb{I}_{L_{a}(q)})(x + \xi_s)ds \leq \int\limits_{0}^{\infty}f(x + \xi_s)ds = I^{x} < \infty\]
	and thus 
	\begin{equation*}
	\mathbb{P}\left( \int\limits_{0}^{\infty}(f\mathbb{I}_{L^+_{a}(q)})(x + \xi_s)ds  < \infty \right) = \mathbb{P}\left( \int\limits_{0}^{\infty}(f\mathbb{I}_{L_{a}(q)})(x + \xi_s)ds  < \infty \right) = 1.
	\end{equation*}
	Now let us consider the second case, i.e. the case $ x \in \Rb\setminus L^+_a(q)$. We define the stopping time
	\begin{equation*}
	\rho^x = \operatorname*{inf}\lbrace s \geq 0 : \xi_s + x \in L^+_a(q)\rbrace
	\end{equation*}
	and distinguish two separate cases for $\rho^x$. First, let us work on the event that $\rho^x = \infty$.
	Then on this event we have  $(f\mathbb{I}_{L^+_{a}(q)})(x + \xi_s) = 0$ for all $s$ large enough as $\xi$ is transient and therefore we conclude that on $\lbrace \rho^x = \infty\rbrace$ 
	\begin{equation*}
	\int\limits_{0}^{\infty} (f\mathbb{I}_{L^+_{a}(q)})(x + \xi_s)\,ds < \infty. 
	\end{equation*}
	Second, let us assume that $\rho^x < \infty$. Then on this event we have that
	\begin{equation*}
	\begin{split}
	&\int\limits_{0}^{\infty}(f\mathbb{I}_{L^+_{a}(q)})(x + \xi_s)ds = \int\limits_{\rho^x}^{\infty}(f\mathbb{I}_{L^+_{a}(q)})(x + \xi_s)ds \\
	&= \int\limits_{0}^{\infty}(f\mathbb{I}_{L^+_{a}(q)})(x + \xi_{\rho^x} + \tilde{\xi}_s)ds,
	\end{split}
	\end{equation*}
	where $\tilde{\xi}=\lbrb{\tilde{\xi}_s}_{s\geq 0}=\lbrb{\xi_{\rho^x+s}-\xi_{\rho^x}}_{s\geq 0}$.
	From the definition of $\rho^x$ and the closedness of $L^+_a(q)$ we can conclude that 
	\[x + \xi_{\rho^x} \in L^+_a(q)\cup \partial L^+_a(q) = L^+_a(q).\]
	Hence, from the first case of the proof we conclude that \[\Pbb{\int\limits_{0}^{\infty}{(f\mathbb{I}_{L^+_{a}(q)})(x + \xi_s) \, ds}  < \infty}=1.\]
	Therefore, from Lemma \ref{lemma2} applied with $t=\infty$ we conclude that
	\begin{equation*}
	\begin{split}
	&a=a \mathbb{P}\left[\int\limits_{0}^{\infty}(f\mathbb{I}_{L^+_{a}(q)})(x + \xi_s) \, ds<\infty\right] \geq \beta_{f\mathbb{I}_{L^+_{a}(q)}, a,\infty} \mathbb{E}\left[ \int\limits_{0}^{\infty}{(f\mathbb{I}_{L^+_{a}(q)})(x + \xi_s) \, ds} \right].
	\end{split}
	\end{equation*}
	However, 
	\begin{equation*}
	\begin{split}
	\beta_{f\mathbb{I}_{L^+_{a}(q)}, a,\infty} &= \operatorname*{inf}_{y \in suppf \cap L^+_{a}(q)} \mathbb{P}\left(\int\limits_{0}^{\infty}{(f\mathbb{I}_{L^+_{a}(q)})(x + \xi_s) \, ds} \leq a \right)\\
	&= \operatorname*{inf}_{y \in suppf \cap L^+_{a}(q)} \mathbb{P}\left(\int\limits_{0}^{\infty}{(f\mathbb{I}_{L^+_{a}(q)})(x +\xi_{\rho^x}+ \tilde{\xi}_{s}) \, ds} \leq a \right)\\ 
	&\geq \operatorname*{inf}_{y \in L^+_{a}(q)} \mathbb{P}(I^{y} \leq a) \geq 1 - q > 0.
	\end{split}
	\end{equation*}
	This concludes the proof of the lemma, when $L^+_a(q)$ is not the empty set. In case it is the whole integral is trivially $0$ and therefore the expectation of it is also zero. 
\end{proof}

\noindent

Recall Definition \ref{def:tra}. Since for every continuous function $f$ or any $f$ that can be approximated as in Remark \ref{rem:genf},  we know from Lemma \ref{lem:L} and Corollary \ref{rem:classf} that  $L_{a}(q), L^+_{a}(q)$ are closed for all $a > 0, \, q \in [0, 1)$ then the following result yields the backward claim of Theorem \ref{main_theorem} in this case. Also, if $f$ is any non-increasing function then according to the proof of Corollary \ref{rem:classf}, $L^+_{a}(q)$ is either empty or contains an interval of the type $[x,\infty),x>0$, and the next lemma is valid, see Remark \ref{rem:closed}. However, we note the weaker assumption it contains regarding the closedness of $L^+_{a}(q)$. 
\begin{lemma}\label{lem:inf}
	Let $x\in\Rb$ be fixed and assume that 
	\begin{itemize}
		\item\label{it:1} either $L^+_{a}(q)$ is closed for some fixed $q\in\lbrb{0,1}$ and across some sequence $\curly{a_j}_{j\geq 1}$ such that $a_j\to\infty$, or for a sequence of pairs $\curly{\lbrb{q_j,a_j}}$ such that $\limi{j}q_j=1$ and $\limi{j}a_j=\infty$;
		\item $\int\limits_{E}{f(x+y) \, U(dy)} = \infty$ for all sets $E$ such that $\mathbb{R} \setminus E$ is $\Pb^x$-transient for the L\'{e}vy process $\xi$.
	\end{itemize}
	Then $\mathbb{P}(I^x = \infty) = 1$.
\end{lemma}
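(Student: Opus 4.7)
My plan is to argue by contradiction. Suppose $\Pbb{I^x<\infty}>0$; by the zero--one law of \cite[Lemma 5]{doring} this forces $\Pbb{I^x<\infty}=1$, and I will seek a contradiction. From the first assumption I select a pair $(a,q)$ with $q\in(0,1)$ such that $L^+_a(q)$ is closed (either alternative supplies such a pair: in the second one, simply take any index $j$, since each $q_j<1$). The strategy is to exhibit a set $E$ whose complement $\Rb\setminus E$ is $\Pb^x$-transient yet for which $\int_E f(x+y)\,U(dy)<\infty$; this will contradict assumption (2). Guided by Lemma \ref{lemma3}, the natural choice of $E$ is the set associated to $L^+_a(q)$ (the one for which the integrand of Lemma \ref{lemma3} becomes $\mathbb{I}_E$), and Lemma \ref{lemma3} immediately delivers
\[
\int_E f(x+y)\,U(dy) \;=\; \Ebb{\int_0^{\infty}\lbrb{f\mathbb{I}_{L^+_a(q)}}(x+\xi_s)\,ds} \;<\; \infty.
\]

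The heart of the argument is then to verify that $\Rb\setminus E$ is indeed $\Pb^x$-transient, equivalently that $x+\xi_s\in L^+_a(q)$ almost surely for all $s$ past some random time. To this end I would introduce the stopping time
\[
\sigma := \inf\bigl\{s\geq 0 : I^x - I^x_s \leq a\bigr\}.
\]
Continuity of $s\mapsto I^x_s$ (the initial lemma) together with $I^x<\infty$ a.s. gives $\sigma<\infty$ almost surely. For any $s\geq\sigma$, non-negativity of $f$ yields the pathwise inequality $\int_s^{\infty}f(x+\xi_u)\,du\leq a$. By the Markov property, conditional on $\mathcal{F}_s$ this tail integral has the distribution of $I^{x+\xi_s}$ (realised via an independent copy of $\xi$), so
\[
G_a(x+\xi_s) \;=\; \Pbb{I^{x+\xi_s}>a\,\big|\,\mathcal{F}_s} \;=\; 0 \quad\text{on $\{\sigma\leq s\}$.}
\]
Combined with $x+\xi_s>0$ eventually (since $\xi_s\to\infty$), this shows that $x+\xi_s\in L^+_a(q)$ for all $s$ beyond some a.s. finite random time, which is exactly the required $\Pb^x$-transience of $\Rb\setminus E$.

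Assumption (2) then forces $\int_E f(x+y)\,U(dy)=\infty$, contradicting the finiteness supplied by Lemma \ref{lemma3}, and we conclude $\Pbb{I^x=\infty}=1$. The step I expect to be the main obstacle is the Markov--conditioning reduction that yields $G_a(x+\xi_s)=0$ \emph{almost surely}: one must argue that a random variable a.s. bounded by $a$, whose conditional distribution given $\mathcal{F}_s$ is exactly the law of $I^{x+\xi_s}$, forces the pointwise identity $G_a(x+\xi_s)=0$, rather than merely an $L^1$ or in-probability statement of the form $\Ebb{G_a(x+\xi_s)}\to 0$. Once this identity is secured, a careful matching of the set $E$ in Lemma \ref{lemma3} with the transient-complement condition of assumption (2) closes the argument, and the second alternative of the first hypothesis is handled in the same way by simply replacing $(a,q)$ by any admissible $(a_j,q_j)$.
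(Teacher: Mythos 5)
Your overall strategy -- argue by contradiction, take $E$ to be (the shift of) $L^+_a(q)$, get $\int_E f(x+y)\,U(dy)<\infty$ from Lemma \ref{lemma3}, and then contradict assumption (2) by showing $\Rb\setminus E$ is $\Pb^x$-transient -- is a legitimate reversal of the paper's argument, and the first half is fine. The gap is exactly where you suspected it, and it is fatal as written. The event $\{\sigma\le s\}=\{\int_s^\infty f(x+\xi_u)\,du\le a\}$ is \emph{not} $\Fc_s$-measurable: $\sigma$ is defined through the total integral $I^x$ and hence depends on the entire future of the path. Consequently you cannot ``evaluate'' the conditional probability $\Pbb{I^{x+\xi_s}>a\,|\,\Fc_s}=G_a(x+\xi_s)$ on that event and conclude it vanishes there. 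What the Markov property actually gives is only $\Ebb{G_a(x+\xi_s)}=\Pbb{\int_s^\infty f(x+\xi_u)\,du>a}\to 0$, i.e.\ $x+\xi_s\in L_a(q)$ \emph{in probability} as $s\to\infty$; this is far from the almost-sure eventual containment that transience of $\Rb\setminus L^+_a(q)$ requires. A concrete way to see the step is false: if $G_a\equiv 1/2$ on the range of the process, then $\Pbb{\int_s^\infty f(x+\xi_u)\,du\le a}=1/2>0$, yet $G_a(x+\xi_s)=1/2$ identically, so the realized tail being $\le a$ in no way forces the conditional law to be supported in $[0,a]$.

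The paper closes precisely this hole by running the implication in the other direction and only ever conditioning at genuine stopping times: Lemma \ref{lemma3} gives $\int_{L^+_a(q)}f(x+y)\,U(dy)<\infty$ unconditionally, so assumption (2) forces $\Rb\setminus L^+_a(q)$ to be non-transient, hence (by the zero--one law) the open set $M=(0,\infty)\setminus L^+_a(q)$ is hit almost surely; exhausting $M$ by compacts $K_n$, applying the strong Markov property at the hitting times $H_n$ of $K_n$, and using $G_a>q$ on $M$ yields $\Pbb{I^x>a}\ge q\,\Pb^x(H_n<\infty)\to q$, after which $a_j\to\infty$ (or $q_j\to 1$) finishes the proof. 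If you want to salvage your contradiction route you would need essentially the same machinery -- successive hitting times of compact subsets of $\Rb\setminus L_a(q)$, at which the conditional tail exceeds $a$ with probability $>q$, contradicting $\int_s^\infty f(x+\xi_u)\,du\to 0$ -- rather than the pointwise conditioning on a future-measurable event. Note also that your argument, if it worked, would use only a single closed $L^+_a(q)$ and never invoke $a_j\to\infty$ or $q_j\to 1$, which should itself have been a warning sign that a step was being claimed for free.
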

\begin{remark}\label{rem:closed}
	Note that the statement of the lemma is valid even if $L^+_{a}(q)\cap [K,\infty)$ satisfies the assumptions of the first item for some $K>0$.
\end{remark}
\begin{proof}
	Choose $a>0,q\in\lbrb{0,1}$ such that $L^+_{a}(q)$ is closed. From Lemma \ref{lemma3} and the assumptions therein we have that for all $v \in \mathbb{R}$
	\[\infty > \mathbb{E}\left[ \int_{0}^{\infty}f\mathbb{I}_{L^+_a(q)}(v + \xi_s)ds \right] = \int_{L^+_a(q)}{f(v + y) \, U(dy)},
	\]
	where we recall that the last identity is in fact the definition of the potential measure for transient \LL  processes.
	Choosing $v=x $ in the relation above we get that $\Rb\setminus L^+_a(q)$ is not $\Pb^x$-transient for $\xi$ and since $\xi$ is transient then even $(0,\infty)\setminus L^+_a(q)$ is not $\Pb^x$-transient for $\xi$. Under the assumptions we have that $M = (0,\infty) \setminus L^+_a(q)$ is an open set and therefore, for  $a > 0, \, q \in [0, 1)$ for which  $L^+_{a}(q)$ is closed, $M$ a union of an increasing sequence of compact sets $K_n$, that is
	\[M = \cup_{n \geq 0}K_n.\]
	Set $H_n = \inf_{s \geq 0}\{ x+\xi_s \in K_n \}$. Since $M$ is not $\Pb^x$-transient for $\xi$ we get that $M$ must be visited by $x+\xi$ almost surely in a finite amount of time and therefore 
	\[\lim\limits_{n \to \infty}{\mathbb{P}^x(H_n < \infty)} = 1.\]
	Then the Markov property at $H_n$ and $K_n\subseteq M$ yield the following sequence of inequalities
	\begin{equation*}
	\begin{split}
	&\mathbb{P}(I^x > a) \geq \mathbb{E}^x \left[ 1_{\{ H_n < \infty \}} \mathbb{E}\left[ 1_{\{ I^{x+\xi_{H_n}}  > a \}} \right] \right] \geq \mathbb{P}^x(H_n < \infty) \inf_{y \in K_n}{\mathbb{P}(I^y > a)} \\
	&\geq q\,\mathbb{P}^x(H_n < \infty).
	\end{split}
	\end{equation*}
	If we let $n \to \infty$ we arrive at 
	\begin{equation}\label{eq:inter}
	\begin{split}
	&\mathbb{P}(I^x > a)\geq q.
	\end{split}
	\end{equation}
	Now if $L^+_{a}(q)$ is closed for some $q>0$ and across a sequence $a_j,j\geq 1,$ such that $\limi{j}a_j=\infty$ then $\mathbb{P}(I^x =\infty)\geq q>0$ and from the zero-one law for $I^x$ we conclude $\mathbb{P}(I^x=\infty)=1$. Otherwise, if $L^+_{a}(q)$ is closed  along a sequence of pairs $\curly{\lbrb{q_j,a_j}}$ such that $\limi{j}q_j=1$ and $\limi{j}a_j=\infty$ taking limit in \eqref{eq:inter} we directly see that $\mathbb{P}(I^x > \infty)=1$.
	This concludes the claim.
\end{proof}
We are now in a position to finish the proof of Theorem \ref{main_theorem}
\begin{proof}[Proof of Theorem \ref{main_theorem}]
	Lemma \ref{lem:inf} already gives a necessary condition for the almost sure finiteness of the random variable $I^x$.  Observe, that under the conditions of Theorem \ref{main_theorem} we know from Corollary \ref{rem:classf} as well as Remark \ref{rem:closed}  that the first condition of Lemma  \ref{lem:inf}  is satisfied.
	
	Let us now instead assume that  
	\begin{equation*}
	\int\limits_{0}^{\infty}f(x+\xi_s) \, ds=\infty\,\text{ almost surely.}
	\end{equation*}
	In order to derive a contradiction let us assume that there is a measurable set $E$ is such 
	\begin{equation*}
	\Rb\setminus E\,\,\text{is $\Pb^x$-transient and } \,\, \int\limits_{E}f(x+y)U(dy)<\infty. 
	\end{equation*}
	Then, these properties imply that
	\begin{equation*}
	\int_{0}^\infty (f\mathbb{I}_{E})\lbrb{x+\xi_s} ds=\infty
	\end{equation*}
	since the process spends only finite amount of time in $\Rb\setminus E$ and the function is either locally bounded by assumption or is implied to be such when it is continuous. This contradicts the fact that
	\begin{equation*}
	\begin{split}
	&\Ebb{\int_{0}^\infty (f\mathbb{I}_{E})\lbrb{x+\xi_s} ds}=\int\limits_{E}f(x+y)U(dy)<\infty.
	\end{split}
	\end{equation*}
	Therefore, we conclude that does not exists a set $E$ with the above properties.
\end{proof}
\begin{proof}[Proof of Corollary \ref{main_cor}]
 We emphasize that the form of $f_n,n\geq 1$, and the fact that the $\curly{f_n}_{n\geq 1}$ is non-decreasing show that Lemma \ref{lem:L} holds true, that is $L^+_a(q)$  ($q \in [0, 1)$) are closed,  which in turn gives the validity of the first item of Lemma \ref{lem:inf}.  Assume next that $\int\limits_{E}f(x+y)U(dy) = \infty\, \forall E\in\Bc\lbrb{\Rb}$ with  $\Rb\setminus E$ being $\Pb^x$-transient. Then, the second item of  Lemma \ref{lem:inf} is true and from it we get that $\mathbb{P}(I^x = \infty) = 1$. The fact that $\int\limits_{0}^{\infty}f(x+\xi_s)ds=\infty$ a.s. and $\int\limits_{E}f(x+y)U(dy)<\infty$ cannot hold simultaneously  for some 	$\Rb\setminus E$ being $\Pb^x$-transient is then proved the same way as in Theorem \ref{main_theorem} when $f$ is locally bounded. Finally, according to \cite[p. 152]{shilov} every non-negative function $f$ in the class $C^+$ therein can be approximated, away from a Lebesgue zero set, say $A_f$, by a non-decreasing sequence of step functions of the type defined in the statement of the current corollary. Also $C^+$ is the class of all Lebesque integrable non-negative functions, see \cite[p.156]{shilov}, and if $\Pbb{x+\xi_s\in A}=0,$ for all $s>0,x\in \Rb$ and for any set $A$ of zero Lebesgue measure the first part of this corollary is applicable and yields the final claim that Theorem \ref{main_theorem} is valid for any non-negative, measurable, locally-bounded function $f$. The fact the argument above is still valid for locally integrable function when $\int_{0}^{t}f(x+\xi_s)ds<\infty$ for all $t>0$ is immediate since we have used local boundedness only to ensure that integrals on finite time horizons are almost surely finite regardless of any other conditions.
\end{proof}
\section*{Acknowledgment}
The authors would like to thank our previous students Velislav Bodurov and Lukas Wiens, who worked in their Master theses through some of our previous preliminary versions of the results outlined above.\\  
The authors are also grateful to two anonymous referees for their careful reading which has led to the significant improvement of our work both as quality of exposition and mathematical results. \\
 The second author acknowledges the  support by the Grant
No BG05M2OP001-1.001-0003, financed by the Science and Education for Smart Growth
Operational Program (2014-2020) and co-financed by the European Union through the
European structural and Investment funds.

\end{document}